\theoremstyle{plain}      
\newtheorem{thm}{Theorem}
\newtheorem{theorem}[thm]{Theorem}     
\newtheorem{corollary}[thm]{Corollary}     
\newtheorem{lemma}[thm]{Lemma}     
\newtheorem{proposition}[thm]{Proposition}     
\newtheorem{claim}[thm]{Claim}   
\theoremstyle{remark}      
\newtheorem{remark}[thm]{Remark}
\theoremstyle{definition}
\DeclareMathAlphabet{\doba}{U}{msb}{m}{n}
\def\cA{\mathcal{A}}
\def\cB{\mathcal{B}}
\def\cI{\mathcal{I}}
\def\cJ{\mathcal{J}}
\def\cM{\mathcal{M}}
\def\cN{\mathcal{N}}
\def\cQ{\mathcal{Q}}
\def\cR{\mathcal{R}}
\def\cS{\mathcal{S}}
\def\cT{\mathcal{T}}
\def\cU{\mathcal{U}}
\def\Sym{\mathrm{Sym}}
\def\no{{|}}
\def\fac{{!}}
\def\tr{{\mathrm{Tr\,}}}
\def\Ric{{\mathrm{Ric}}}
\def\Riem{{\mathrm{Riem}}}
\def\scal{\mathrm{scal\,}}
\def\di{{\rm d}}
\newcommand{\definedas}{\mathrel{\raise.095ex\hbox{\rm :}\mkern-5.2mu=}}
\begin{document}     


\title
{A detailed proof of a theorem of  Aubin}
 
\author{Farid Madani} 
\address{Fakult\"at f\"ur  Mathematik \\ 
Universit\"at Regensburg \\
93053 Regensburg \\  
Germany}
\email{Farid.Madani@mathematik.uni-regensburg.de}

\begin{abstract}
In this note we give a detailed proof of a theorem of 
Aubin, namely \cite[Th\'eor\`eme 5]{Aub3}.
\end{abstract}

\keywords{scalar curvature; tensor symmetrizations;}

\maketitle


\section{Introduction}

The goal of this note is to give a detailed proof of the following theorem of Aubin.
\begin{theorem}[Aubin \cite{Aub3}]\label{main thm}
Let $(M,g)$ be a compact Riemannian manifold of dimension $n$. 
Assume that, in a geodesic normal coordinate chart around a point 
$x\in M$, $\det(g)=1+O(r^N)$, with $N$  sufficiently large, where 
$r=d(x,\cdot)$. Let  $\omega$ denote 
$\omega:=\inf \{k\in \mathbb N : |\nabla^k Weyl_g(x)|\neq0\}$. 
If  $|\nabla^{\omega}\scal(x)|=0$, then $\Delta^{\omega+1}\scal(x)<0$. 
In particular,  $\int_{\partial B_r(x)}\scal \di \sigma<0$, for $r$ sufficiently small.
\end{theorem}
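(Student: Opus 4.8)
\medskip
\noindent\emph{Proof strategy.} The whole argument takes place inside the Taylor expansion of the Riemannian volume density in the given chart. Writing $(y^1,\dots,y^n)$ for the geodesic normal coordinates at $x$, one has $\sqrt{\det g}\,(y)=1+\sum_{d\ge 2}P_d(y)$ with $P_d$ homogeneous of degree $d$, and the standard normal-coordinate expansion of $g_{ij}$ shows that the coefficient tensor of $P_d$ is a universal linear combination of the total symmetrization $\Sym\nabla^{d-2}\Ric(x)$ and of \emph{curvature monomials} at $x$, i.e.\ contractions of tensor products $\nabla^{a_1}\Riem\otimes\cdots\otimes\nabla^{a_m}\Riem$ with $m\ge 2$ of total scaling weight $d$ (a factor $\nabla^{a}\Riem$ carrying weight $a+2$). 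The first task is to make these formulas explicit for $d\le 2\omega+4$; in particular $P_2$ is a nonzero multiple of $\Ric(x)(y,y)$, $P_3$ of $\Sym\nabla\Ric(x)$, and $P_4$ involves, besides $\Sym\nabla^{2}\Ric(x)$, the weight-$4$ monomials built from $\Riem\otimes\Riem$. I would either quote these expansions or rederive them from the Gauss lemma.

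Since $\det g=1+O(r^N)$ is equivalent to $\sqrt{\det g}=1+O(r^N)$, the hypothesis (with $N\ge 2\omega+4$, which is what ``$N$ sufficiently large'' means here) forces $P_2\equiv\cdots\equiv P_{2\omega+4}\equiv 0$. Reading these off in increasing order of $d$: from $P_2\equiv 0$, $\Ric(x)=0$, hence $\scal(x)=0$ and, by the Weyl--Schouten decomposition of the curvature tensor, $\Riem(x)=W(x)$ with $W$ the Weyl tensor; from $P_3\equiv 0$, $\Sym\nabla\Ric(x)=0$, hence $\nabla\scal(x)=0$ by the contracted Bianchi identity $2\operatorname{div}\Ric=\nabla\scal$; and $P_4\equiv 0$ then settles the case $\omega=0$, since $\Sym\nabla^{2}\Ric(x)$ becomes a universal multiple of a contraction of $W\otimes W$ at $x$, whose full metric trace gives $\Delta\scal(x)=-c_0\,|W(x)|^{2}$ with $c_0>0$.

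For $\omega\ge 1$ the same scheme runs at order $2\omega+4$, and the real work is an intermediate induction showing that $\nabla^{j}\Ric(x)=0$ and $\nabla^{j}\scal(x)=0$ for all $j\le\omega$ — whence also $\nabla^{j}\Riem(x)=0$ for $j<\omega$, $\nabla^{\omega}\Riem(x)=\nabla^{\omega}W(x)$, and the vanishing at $x$ of every curvature monomial of weight $\le 2\omega+3$. The inductive step uses: (i) the relation $P_{j+2}\equiv 0$, whose curvature monomials vanish at $x$ — a quadratic one $\nabla^{p}\Riem\otimes\nabla^{q}\Riem$ has $p+q=j-2<\omega$, hence a factor with fewer than $\omega$ derivatives, which is zero because $\nabla^{p}W(x)=0$ and the Schouten part $\nabla^{p}\Riem(x)-\nabla^{p}W(x)$ is built from $\nabla^{p}\Ric(x),\nabla^{p}\scal(x)$, zero by induction; monomials with more factors are excluded similarly — so $\Sym\nabla^{j}\Ric(x)=0$; (ii) the second Bianchi identity, which writes the non-symmetric part of $\nabla^{j}\Ric$ at $x$ through $\nabla^{j}\Riem$ and lower-weight curvature products that vanish by induction, upgrading $\Sym\nabla^{j}\Ric(x)=0$ to $\nabla^{j}\Ric(x)=0$ for $j<\omega$; and (iii) the hypothesis $|\nabla^{\omega}\scal(x)|=0$, the extra input needed at $j=\omega$ to conclude $\nabla^{\omega}\Ric(x)=0$, hence $\nabla^{\omega}\Riem(x)=\nabla^{\omega}W(x)$.

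Granting this, in $P_{2\omega+4}\equiv 0$ exactly one curvature monomial is supported at $x$: a quadratic one $\nabla^{p}\Riem\otimes\nabla^{q}\Riem$ with $p+q=2\omega$ is nonzero at $x$ only if $p,q\ge\omega$, i.e.\ $p=q=\omega$, while a monomial with three or more factors needs weight $\ge 3\omega+6>2\omega+4$. Hence $\Sym\nabla^{2\omega+2}\Ric(x)$ is a universal multiple of a contraction of $\nabla^{\omega}W\otimes\nabla^{\omega}W$ at $x$ (using $\nabla^{\omega}\Riem(x)=\nabla^{\omega}W(x)$). Taking the full metric trace of both sides, and using the Bianchi identities together with the vanishings just established to normal-order the covariant derivatives, one reaches an identity $\alpha\,\Delta^{\omega+1}\scal(x)=\beta\,|\nabla^{\omega}W(x)|^{2}$ with $\alpha>0$; the substance of the theorem is that the surviving universal constant has the correct sign $\beta<0$, so $\Delta^{\omega+1}\scal(x)=-c_\omega\,|\nabla^{\omega}W(x)|^{2}$ with $c_\omega>0$, which is strictly negative since $\nabla^{\omega}W(x)\neq 0$ by the definition of $\omega$. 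Finally, the mean-value expansion $\int_{\partial B_r(x)}\scal\,\di\sigma=\sigma_{n-1}r^{n-1}\bigl(\sum_{k\ge 0}b_k\,\Delta^{k}\scal(x)\,r^{2k}+O(r^{N})\bigr)$ with $b_k>0$, together with $\scal(x)=\Delta\scal(x)=\cdots=\Delta^{\omega}\scal(x)=0$ (a by-product of the induction), has leading term $\sigma_{n-1}b_{\omega+1}r^{n-1+2\omega+2}\Delta^{\omega+1}\scal(x)<0$, so $\int_{\partial B_r(x)}\scal\,\di\sigma<0$ for $r$ small. The main obstacle is the control of the universal constants — above all, proving $c_\omega>0$ rather than merely $c_\omega\ge 0$ — together with the combinatorial verification that at weight $2\omega+4$ the term $\nabla^{\omega}W\otimes\nabla^{\omega}W$ really is the only survivor.
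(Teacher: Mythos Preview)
Your overall architecture---extract the Hebey--Vaugon relation at degree $2\omega+4$ from the volume condition, isolate the quadratic $\nabla^{\omega}\Riem\otimes\nabla^{\omega}\Riem$ term, and trace---matches the paper. The gap is in step (iii): you assert that $\Sym\nabla^{\omega}\Ric(x)=0$ together with $|\nabla^{\omega}\scal(x)|=0$ yields $\nabla^{\omega}\Ric(x)=0$, and hence $\nabla^{\omega}\Riem(x)=\nabla^{\omega}W(x)$. This is false in general. Although the covariant derivatives of $\Ric$ commute among themselves at $x$ (since $\nabla^{<\omega}\Riem(x)=0$), exchanging a derivative index with a Ricci index via the contracted second Bianchi identity produces $\nabla^{\omega}\Riem(x)$, which is nonzero. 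So the full symmetrization genuinely destroys information that the scalar hypothesis does not restore; one only gets the relation \eqref{ric no} of the paper, not vanishing.

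Because $\nabla^{\omega}\Ric(x)$ survives, the traced identity is not $\alpha\,\Delta^{\omega+1}\scal(x)=\beta\,|\nabla^{\omega}W(x)|^{2}$ with a single constant $\beta$ to sign-check. Two extra families of terms appear: (a) commutator terms from normal-ordering the $(2\omega+2)$ derivatives of $\Ric$---these are the $\cS_2,\cS_3,\cS_4$ of Proposition~\ref{prop S}, each quadratic in $\nabla^{\omega}\Ric$ or mixed $\nabla^{\omega}\Riem\cdot\nabla^{\omega}\Ric$ and carrying signs of both kinds; and (b) the full trace of the quadratic Riemann term $\cQ(R)$, which is not simply $|\nabla^{\omega}\Riem|^{2}$ but a combination of the $27$ contractions in Table~\ref{table}, many of them Ricci-type quantities $\cT_\ell,\cM_\ell,\cN_\ell$. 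The substance of the theorem is precisely that the positive pieces of $\tr\cQ(R)$ dominate the indefinite pieces coming from $\cS_2+\cS_3$ and $\cS_4$; this is where the algebraic inequalities of Lemma~\ref{lpos} and identity~\eqref{sym ric} are indispensable, and it is far from a single sign computation. Your sketch, by setting $\nabla^{\omega}\Ric(x)=0$, erases exactly the part of the problem that carries all the difficulty.
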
 
For the particular values of $\omega\in\{1,2\}$, Theorem \ref{main thm} is proven by 
Hebey and Vaugon \cite{HV}. The case $\omega=3$ was studied by L. Zhang, as mentioned 
in \cite{Aub3}. 

Theorem \ref{main thm} is fundamental in questions related to the compactness of 
the Yamabe equation solutions and the equivariant Yamabe problem. 
For example, in \cite{Mad2, Mad4}, the author used Theorem \ref{main thm} to prove 
the validity of the Hebey--Vaugon conjecture, when $\omega\leq \frac{n-6}{2}$ (more details 
are given in Section \ref{EYP}).

Aubin used Theorem \ref{main thm} in \cite{Aub3, Aub5, Aub4} 
in his study of the compactness of the set of solutions to the Yamabe 
equation.  He claimed the compactness of this set except for the round
 sphere. This claim is in a contradiction with the counter examples constructed 
by Brendle \cite{Bre} and by Brendle and Marques \cite{BMar}. Many mathematicians 
had serious concerns about the correctness of \cite{Aub3}, although it remained 
unclear which parts of \cite{Aub3} are correct. For example, the proof of 
\cite[Th\'eor\`eme 5]{Aub3} is only sketched and not given in all details, and thus 
it is difficult for a reader to check whether the result of this theorem holds or not. 
The aim of this note is to provide a rigorous
proof of \cite[Th\'eor\`eme 5]{Aub3}.

In our proof of Theorem \ref{main thm}, we did not follow exactly the strategy proposed 
by Aubin in \cite{Aub3}. However, there are certain common points. 
For example, the formula in Corollary \ref{equa cor} coincides with the last 
formula on page 279 in \cite{Aub3}. 

In order to facilitate the reading, we gathered all notation in 
Notation Index~\ref{notation}.

\section{The equivariant Yamabe problem}\label{EYP}
In this section, we present an application of Theorem \ref{main thm}. 
Let $(M,g)$ be a compact Riemannian manifold of dimension $n\geq 3$ and $G$ be a subgroup of the isometry 
group $\textrm{Isom}(M,g)$. The equivariant Yamabe problem consists in finding a 
$G$-invariant metric in the conformal class of $g$, with constant scalar curvature.  
Hebey and Vaugon proved that a sufficient condition to solve this problem is to prove that
the following conjecture holds. 
\paragraph{\bf Hebey--Vaugon conjecture}\emph{If $(M,g)$ is not 
conformal to the round sphere $S^n$ or if the action of $G$ has no fixed point, then the 
following strict inequality holds }
\begin{equation*}
\inf_{g'\in [g]^G} \frac{\int_M\scal_{g'}\di v_{g'}}{(\int_M\di v_{g'})^{\frac{n-2}{n}}}<
n(n-1)\omega_n^{2/n}(\inf_{x\in M}\textrm{card\,} (G\cdot x))^{2/n},
\end{equation*}
\emph{where $\omega_n$ is the volume of round sphere $S^n$}.

For $\omega\leq 2$ Hebey and Vaugon \cite{HV} proved Theorem \ref{main thm} and they 
used this result to prove the above conjecture. They also proved it 
for $\omega> \frac{n-6}{2}$, assuming the positive mass theorem.  

In \cite{Mad2, Mad4}, the author used Theorem \ref{main thm} to prove 
the validity of the Hebey--Vaugon conjecture for $\omega\leq \frac{n-6}{2}$, 
by constructing a local $G$-invariant positive test function,  
which involves the scalar curvature, where the negative sign of 
$\int_{\partial B_r(x)}\scal \di \sigma$ plays an important role. 
For more details, see \cite{HV}, \cite{Mad2, Mad4} 
and the references therein.

\section{The Hebey-Vaugon formulas}

Let us first mention some notation and convention used throughout this note. 
Our convention for the components of the Riemann and Ricci curvature tensors are: 
in a local normal coordinate chart, \index{Rie@$R^q_{jk\ell}X^j=\nabla_k\nabla_\ell X^q-\nabla_\ell\nabla_k X^q$|textbf} 
\begin{equation*}
R^q_{jk\ell}X^j=(\nabla_k\nabla_\ell X)^q-(\nabla_\ell\nabla_k X)^q, 
\quad (\nabla_{ij}\alpha)_k=(\nabla_{ji}\alpha)_k-R^\ell_{k ij}\alpha_\ell,
\end{equation*} 
for any vector field $X$ and  $1$-form $\alpha$. Moreover, we set
\index{Riem@$R_{ijk\ell}=g_{iq}R^q_{jk\ell}$|textbf} $R_{ijk\ell}=g_{iq}R^q_{jk\ell}$, 
\index{Ric@$\Ric_{ij}=R^\ell_{ i \ell j}$|textbf} $\Ric_{ij}=R^\ell_{ i \ell j}$.
For any $k$-covariant tensor $T$, we have:
\[\nabla_{q}T_{p_1\ldots p_{k}}:=(\nabla_{q}T)_{p_1\ldots p_{k}},\quad \Delta T_{p_1\ldots p_{k}}:=(g^{pq}\nabla_{pq}T)_{p_1\ldots p_{k}}\]
We define the symmetrization of the tensor $T$ by
\index{Sym@$\Sym T_{p_1\ldots p_k}= \underset{\sigma\in\mathfrak{S}(k)}{\sum} T_{p_{\sigma(1)}\ldots p_{\sigma(k)} }$|textbf}
\[\Sym T_I:= \underset{p_1,\ldots, p_k}\Sym T_{p_1\ldots p_k}:= \underset{\sigma\in\mathfrak{S}(k)}{\sum} T_{p_{\sigma(1)}\ldots p_{\sigma(k)}},\]
where \index{S@$\mathfrak{S}(k)$ symmetric group of $\{1,\ldots,k\}$|textbf} $\mathfrak{S}(k)$ is the symmetric group on the finite set $\{1,\ldots,k\}$ and $I=(p_1,\ldots,p_k)$ is a multi-index $k$-tuple. 
The two-by-two contraction of $\Sym T_I$, denoted by $\tr \Sym T_I$, is defined by
\index{Tr@$\tr\Sym T= g^{p_1p_2}\ldots g^{p_{k-1}p_k}\Sym T_{p_1\ldots p_k}$|textbf}
\begin{equation*}
\tr\Sym T_I:=\begin{cases} & g^{p_1p_2}\ldots g^{p_{k-1}p_k}\Sym T_{p_1\ldots p_k}, \textrm{ if $k$ is even},\\
&  g^{p_1p_2}\ldots g^{p_{k-2}p_{k-1}}\Sym T_{p_1\ldots p_k}, \textrm{ if $k$ is odd}.
\end{cases}
\end{equation*}
Note that when $k$ is odd, the two-by-two contraction of $\Sym T$ is a $1$-covariant tensor. 
We always use the Einstein summation convention, \emph{i.e.} each time an index occurs twice we sum over it.\\

Let $(M,g)$ be a compact Riemannian manifold of dimension $n$. We fix $x\in M$ and define: \index{o@$\omega=\inf \{k\in \mathbb N : "|\nabla^k Weyl_g(x)"|\neq0\}$|textbf} 
$\omega=\inf \{k\in \mathbb N : |\nabla^k Weyl_g(x)|\neq0\}$.  Lee and Parker \cite{LP}  proved that in each conformal class $[g]$, there exists a metric $g'$ which satisfies, in a geodesic normal chart, $\det(g')=1+O(r^N)$, with $N$  sufficiently large, where $r=d(x,\cdot)$. This result was extended by  Hebey and Vaugon \cite{HV} in the equivariant setting. If $G$ is a subgroup of the isometry group ${\rm Isom}(M,g)$ and $[g]^G$ denotes the $G$-invariant conformal class of $g$, then they proved
\begin{lemma}\label{lemme HV ex}
In each class  $[g]^G$, there exists a metric $g'$ which satisfies, in a geodesic normal chart, $\det(g')=1+O(r^N)$, with $N$  sufficiently large, where $r=d(x,\cdot)$. Moreover, in this chart, the Taylor expansion of $g'$ is given by
\begin{multline*}
 g'_{ij}(y)=\delta_{ij}+\hspace{-0.4cm}\sum_{\omega+4\leq m \leq 2\omega+6}\frac{2(m-3)}{(m-1)!}\nabla_{p_3\ldots p_{m-2}}R'_{ip_1p_2j}(x)x^{p_1}\ldots x^{p_{m-2}} +\\
K(\omega)\sum_{k=1}^n\nabla_{p_3\ldots p_{\omega+2}}R'_{ip_1p_2k}(x)\nabla_{p_{\omega+5}\ldots p_{2\omega+4}}R'_{jp_{\omega+3}p_{\omega+4}k}(x)x^{p_1}\ldots x^{p_{2\omega+4}}+O(r^{2\omega+5}),
\end{multline*} 
where $K(\omega)=\frac{(3\omega+8)(\omega+1)^2}{(2\omega+5)[(\omega+3)!]^2}$.
\end{lemma}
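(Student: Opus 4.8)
The plan is to establish the two assertions of Lemma~\ref{lemme HV ex} in turn: first the existence of a $G$-invariant conformal metric $g'$ with $\det(g')=1+O(r^N)$, and then its explicit Taylor expansion in the associated geodesic normal chart. Throughout, $R'$, $\Ric'$ and $W'$ denote the Riemann, Ricci and Weyl tensors of $g'$.

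For the existence I would follow the construction of conformal normal coordinates of Lee--Parker, together with its equivariant refinement. Writing $g'=e^{2f}g$ with $f(x)=0$ and $\di f(x)=0$, the condition $\det(g')=1+O(r^N)$ in $g'$-normal coordinates is equivalent, after the standard change-of-coordinates computation, to $\Sym\nabla^{m}\Ric'(x)=0$ for all $0\le m\le N-2$. These conditions are solved by induction on $m$: in the equation $\Sym\nabla^{m}\Ric'(x)=0$ the highest-order unknown is the degree-$(m+2)$ Taylor coefficient of $f$, entering through $-(n-2)\Sym\nabla^{m+2}f(x)$, and since this term realizes an arbitrary symmetric $(m+2)$-tensor the equation is solvable; this fixes $f$ to the required order. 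For the $G$-invariant statement one uses that the defining conditions are natural under isometries, so the solution can be chosen $G$-invariant (averaging over $G$), which is the Hebey--Vaugon extension.

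For the expansion I would start from the classical recursion for a metric in its own geodesic normal chart, obtained by integrating the Jacobi equation along the radial geodesics $t\mapsto tx$. This writes the degree-$m$ homogeneous part of $g'_{ij}$ as a universal combination of a single \emph{linear} term $c_m\,\nabla_{p_3\ldots p_{m-2}}R'_{ip_1p_2j}(x)\,x^{p_1}\cdots x^{p_{m-2}}$ and terms that are \emph{quadratic or of higher order} in the covariant derivatives of $R'$; tracking the recursion gives $c_m=\frac{2(m-3)}{(m-1)!}$ (for instance $c_4=\tfrac13$, as in the familiar leading term). I would then feed in the two hypotheses. The determinant condition forces every trace (Ricci-type) contraction that occurs in the symmetrized radial terms to vanish, so that in these contractions $R'$ may be replaced by $W'$; the hypothesis $\nabla^{k}W'(x)=0$ for $k<\omega$ then annihilates all homogeneous parts of degree $<\omega+2$, that is, all linear terms with $m<\omega+4$. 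Retaining terms up to degree $2\omega+4$, the order dictated by the error $O(r^{2\omega+5})$, leaves exactly the linear sum over $\omega+4\le m\le 2\omega+6$.

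It remains to identify the nonlinear contribution. Because the leading nonvanishing symmetrized curvature derivative sits at order $\omega$, i.e.\ degree $\omega+2$, any product of two curvature factors has degree at least $2\omega+4$, with equality only for the product of two copies of $\nabla^{\omega}R'$; every other quadratic term and all cubic and higher terms have degree $\ge 2\omega+5$ and fall into the error. The surviving degree-$(2\omega+4)$ product is the stated $K(\omega)$-term. Pinning down its coefficient is the main obstacle: one must carry this single product term through the radial recursion and then reorganize the resulting index contractions, using the first and second Bianchi identities and the vanishing symmetrized Ricci derivatives, into the contracted form $\sum_{k}\nabla_{p_3\ldots p_{\omega+2}}R'_{ip_1p_2k}\,\nabla_{p_{\omega+5}\ldots p_{2\omega+4}}R'_{jp_{\omega+3}p_{\omega+4}k}$. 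This combinatorial bookkeeping, rather than any conceptual difficulty, is what produces the factor $K(\omega)=\frac{(3\omega+8)(\omega+1)^2}{(2\omega+5)[(\omega+3)!]^2}$; the case $\omega=0$, where it reduces to the classical value $K(0)=\tfrac{2}{45}$, provides a useful consistency check.
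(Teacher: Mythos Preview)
The paper does not give its own proof of this lemma: it is stated as a result of Hebey and Vaugon \cite{HV}, extending Lee--Parker \cite{LP}, and is quoted without argument. So there is no in-paper proof to compare your proposal against.

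That said, your outline follows the standard route that \cite{LP} and \cite{HV} themselves take: solve the symmetrized Ricci conditions order by order via the conformal factor (with $G$-averaging for the equivariant version), then read off the Taylor expansion from the radial Jacobi recursion for the metric in its own normal chart, and use the vanishing $|\nabla^{k}\Riem'(x)|=0$ for $k\le\omega-1$ (which is Theorem~\ref{HV-formulas}.1, a consequence of the determinant condition together with the Weyl vanishing) to kill the low-order linear terms and all nonlinear terms except the single quadratic one at degree $2\omega+4$. One point to tighten: your passage ``replace $R'$ by $W'$ and then use $\nabla^{k}W'(x)=0$'' is not quite how the lower-order terms disappear; what is actually used is that the full $\nabla^{k}R'(x)$ vanishes for $k\le\omega-1$, which in turn requires both the determinant condition and the definition of $\omega$. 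Apart from that, the only place where your sketch is genuinely incomplete is, as you say yourself, the derivation of the constant $K(\omega)$, which you do not carry out; the consistency check $K(0)=\tfrac{2}{45}$ is correct but does not substitute for the general computation.
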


Using the above Taylor expansion of the metric and writing the metric as exponential of some symmetric matrix, Hebey and Vaugon proved the following formulas. 

\begin{theorem}[Hebey--Vaugon \cite{HV}] \label{HV-formulas}
Let $(M,g)$ be a compact Riemannian manifold of dimension $n$. Assume that $\det(g)=1+O(r^N)$, with $N$  sufficiently large, in a geodesic normal coordinate chart around $x$. Then the following statements hold 
\begin{enumerate}[label=\arabic*.]
\item For any nonnegative integer $ k\leq \omega-1,\; |\nabla^k\Riem(x)|=0$. \index{R@$\no\nabla^k\Riem\no$ $k$-th derivative norm of $\Riem$|textbf} 
\item For any nonnegative integer $k\leq 2\omega+1$, the $k$-th covariant derivatives of the Riemann, Ricci and scalar curvatures of $g'$ coincide with the usual partial derivatives and thus they commute. Namely, for any multi-index $\;\beta\in\{1,\ldots, n\}^k$, 
$$\nabla_\beta \Riem(x)=\partial_\beta \Riem(x),\;\nabla_\beta \Ric(x)=\partial_\beta \Ric(x),\; \nabla_\beta \scal_{g}(x)=\partial_\beta \scal_{g}(x).$$
\item   For any $\omega+2\leq m\leq 2\omega+3$, $\underset{p_1\ldots p_m}\Sym\nabla_{p_3\ldots p_m}\Ric_{p_{1}p_{2}}(x)=0$ and 
\begin{multline}\label{formula}
\underset{p_1\ldots p_{2\omega+4}}
\Sym    \biggl\{\nabla_{p_3\ldots p_{2\omega+4}}  \Ric_{p_{1}p_{2}}(x)\\
+ C(\omega)\nabla_{p_3\ldots p_{\omega+2}}R_{ip_1p_2j}(x)\cdot\nabla_{p_{\omega+5}\ldots p_{2\omega+4}}R_{ip_{\omega+3}p_{\omega+4}j}(x)\biggr\}=0,
\end{multline}
\end{enumerate}
where $C(\omega)=(\omega+1)^2(\omega+2)^2(2\omega+2)![(\omega+3)!]^{-2}$. \index{C@$C(\omega)=\frac{(\omega+1)^2(\omega+2)^2(2(\omega+1))"!}{[(\omega+3)"!]^{2}}$|textbf} 
\end{theorem}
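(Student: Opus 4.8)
The plan is to write the metric as the exponential of a symmetric matrix field and to turn the normalisation $\det(g)=1+O(r^{N})$ into pointwise curvature relations by taking traces. Put $g=\exp(S)$, where $S=\log g$ is the symmetric matrix field defined near $x$ with $S(x)=0$; then $\det g=\exp(\tr S)$, so $\det(g)=1+O(r^{N})$ is equivalent to $\tr S=O(r^{N})$. Since $g_{ij}-\delta_{ij}=O(r^{\omega+2})$ by Lemma~\ref{lemme HV ex}, we have $S=(g-\delta)-\tfrac12(g-\delta)^{2}+O(r^{3\omega+6})$, so --- keeping, as we may since $N$ is large, only terms of degree at most $2\omega+4$ --- $S$ equals the sum of the degree $\omega+2,\ldots,2\omega+3$ terms of $g-\delta$, plus the degree-$(2\omega+4)$ linear term and the degree-$(2\omega+4)$ $K(\omega)$-term of Lemma~\ref{lemme HV ex}, minus $\tfrac12\,A^{2}$, where $A_{ij}(y):=\frac{2(\omega+1)}{(\omega+3)!}\nabla_{p_{3}\ldots p_{\omega+2}}R_{ip_{1}p_{2}j}(x)\,y^{p_{1}}\cdots y^{p_{\omega+2}}$ denotes the leading (degree $\omega+2$) part of $g-\delta$.

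Items 1 and 2 follow from Lemma~\ref{lemme HV ex} alone. Since the expansion there begins at degree $\omega+2$, $\partial^{d}g(x)=0$ for $d\le\omega+1$. As $\nabla^{k}\Riem(x)$ is a universal polynomial in the $\partial^{d}g(x)$ with $d\le k+2$, it vanishes for $k\le\omega-1$, which is item~1 (and then $\nabla^{k}\Ric(x)=\nabla^{k}\scal(x)=0$ as well). For item~2 I would use a degree count: $\nabla_{\beta}\Riem(x)$ and $\partial_{\beta}\Riem(x)$ have the same linear part in the variables $\{\partial^{d}g(x)\}$, and their difference is a sum of monomials $\partial^{d_{1}}g(x)\cdots\partial^{d_{s}}g(x)$ with $s\ge2$ and $d_{1}+\cdots+d_{s}=|\beta|+2$; such a monomial can be nonzero only if every $d_{i}\ge\omega+2$, forcing $|\beta|+2\ge2(\omega+2)$, i.e. $|\beta|\ge2\omega+2$. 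Hence for $|\beta|\le2\omega+1$ the covariant and partial derivatives of $\Riem$ (and likewise of $\Ric$ and $\scal$) agree at $x$; since partial derivatives commute, so do the covariant ones up to that order. The same count also kills, at $x$, the commutator correction $[\nabla_{a},\nabla_{b}]\nabla^{k-2}\Riem\sim\Riem\ast\nabla^{k-2}\Riem$ for $k\le2\omega+1$.

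For item~3 I impose $\tr S=O(r^{N})$ degree by degree; degrees below $\omega+2$ give nothing, as those terms of $g-\delta$ are absent. For $\omega+2\le d\le 2\omega+3$ the degree-$d$ part of $\tr S$ equals $\delta^{ij}$ applied to the degree-$d$ part of $g-\delta$, that is $\frac{2(d-1)}{(d+1)!}\,\delta^{ij}\nabla_{p_{3}\ldots p_{d}}R_{ip_{1}p_{2}j}(x)\,y^{p_{1}}\cdots y^{p_{d}}=-\frac{2(d-1)}{(d+1)!}\nabla_{p_{3}\ldots p_{d}}\Ric_{p_{1}p_{2}}(x)\,y^{p_{1}}\cdots y^{p_{d}}$, using $\delta^{ij}R_{ip_{1}p_{2}j}=-\Ric_{p_{1}p_{2}}$, which follows from our curvature conventions and the symmetries of $R$. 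Its vanishing for all $y$ forces its totally symmetric part in $p_{1},\ldots,p_{d}$ to vanish, i.e. $\underset{p_{1}\ldots p_{d}}{\Sym}\nabla_{p_{3}\ldots p_{d}}\Ric_{p_{1}p_{2}}(x)=0$ --- the first assertion of item~3. At the remaining degree $d=2\omega+4$ the quadratic correction enters, and setting the degree-$(2\omega+4)$ part of $\tr S$ to zero yields, after passing to the totally symmetric part in $P:=(p_{1},\ldots,p_{2\omega+4})$,
\[
-\tfrac{2(2\omega+3)}{(2\omega+5)!}\underset{P}{\Sym}\,\nabla_{p_{3}\ldots p_{2\omega+4}}\Ric_{p_{1}p_{2}}(x)+\Bigl(K(\omega)-\tfrac12\bigl(\tfrac{2(\omega+1)}{(\omega+3)!}\bigr)^{2}\Bigr)\underset{P}{\Sym}\,\nabla_{p_{3}\ldots p_{\omega+2}}R_{ip_{1}p_{2}j}(x)\,\nabla_{p_{\omega+5}\ldots p_{2\omega+4}}R_{ip_{\omega+3}p_{\omega+4}j}(x)=0 ,
\]
where I have used that $\delta^{ij}(A^{2})_{ij}=\sum_{i,k}A_{ik}A_{ik}$, so that the trace of the $K(\omega)$-term and the trace of $\tfrac12A^{2}$ carry the same index structure.

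The step I expect to be most delicate is finishing this last computation. By the algebra of factorials one checks that $K(\omega)-\tfrac12\bigl(\tfrac{2(\omega+1)}{(\omega+3)!}\bigr)^{2}=-\dfrac{(\omega+1)^{2}(\omega+2)}{(2\omega+5)[(\omega+3)!]^{2}}$, so dividing the displayed identity by $-\tfrac{2(2\omega+3)}{(2\omega+5)!}$ turns it into exactly \eqref{formula}, with coefficient $\dfrac{(\omega+1)^{2}(\omega+2)(2\omega+5)!}{2(2\omega+3)(2\omega+5)[(\omega+3)!]^{2}}=\dfrac{(\omega+1)^{2}(\omega+2)^{2}(2\omega+2)!}{[(\omega+3)!]^{2}}=C(\omega)$. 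Besides this bookkeeping, one must handle the index manipulations --- pair symmetry and the first Bianchi identity --- needed to bring both quadratic Riemann expressions into the common form $\nabla_{p_{3}\ldots p_{\omega+2}}R_{ip_{1}p_{2}j}\cdot\nabla_{p_{\omega+5}\ldots p_{2\omega+4}}R_{ip_{\omega+3}p_{\omega+4}j}$ appearing in \eqref{formula}, and to justify that reducing to the totally symmetric part loses nothing because $y$ is arbitrary.
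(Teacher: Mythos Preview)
The paper does not give its own proof of this theorem; it is quoted from Hebey--Vaugon \cite{HV}, with only the one-line hint ``Using the above Taylor expansion of the metric and writing the metric as exponential of some symmetric matrix, Hebey and Vaugon proved the following formulas.'' Your proposal carries out exactly this strategy---writing $g=\exp(S)$, reading $\det g=1+O(r^{N})$ as $\tr S=O(r^{N})$, and extracting the identities degree by degree from the expansion of Lemma~\ref{lemme HV ex}---and the factorial bookkeeping producing $C(\omega)$ is correct.
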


Assume that $g$ satisfies the assumptions of Theorem \ref{HV-formulas}. 
It follows from the theorem that $\Delta^\ell\scal(x)=0$ and 
$|\nabla\Delta^\ell\scal(x)|=0$, for any nonnegative integer $\ell\leq \omega$.
Indeed, $\tr\underset{p_1\ldots p_{2\ell}}\Sym\nabla_{p_1\ldots p_{2\ell}}\scal(x)= 
(2\ell)!\Delta^\ell\scal(x)$ and 
$\underset{p_1\ldots p_{2\ell+3}}\Sym\nabla_{p_3\ldots p_{2\ell+3}}\Ric_{p_{1}p_{2}}(x)=
\underset{p_1\ldots p_{2\ell+2}}\Sym\bigl\{2\nabla_{p_2\ldots p_{2\ell+2}}\Ric_{p_{1}p_{2\ell+3}}(x)+(2\ell+1)\nabla_{p_3\ldots p_{2\ell+3}}\Ric_{p_{1}p_{2}}(x)\bigr\},$ since by Theorem~\ref{HV-formulas}, the covariant derivatives of order at most $2\omega+1$ of the Ricci curvature tensor and the scalar curvature commute. It follows that
\begin{gather}
2(2\ell)!(\ell+1)^2\Delta^\ell\scal(x)=\tr\underset{p_1\ldots p_{2\ell+2}}\Sym\nabla_{p_3\ldots p_{2\ell+2}}\Ric_{p_{1}p_{2}}(x)=0, \label{Delta scal1}\\
  (2\ell+2)!(\ell+2)\nabla_{p_{2\ell+3}}\Delta^\ell\scal(x)=\tr\underset{p_1\ldots p_{2\ell+3}}\Sym\nabla_{p_3\ldots p_{2\ell+3}}\Ric_{p_{1}p_{2}}(x)=~0.\label{Delta scal2}
\end{gather} 
Moreover, if we assume that $|\nabla^\omega\scal(x)|=0$, then the $(2\omega+2)$-covariant derivatives of the scalar curvature at $x$ commute (this fact will be proven at the end of the proof of Proposition~\ref{prop S}).    
For $r=d(x,\cdot)$ sufficiently  small, one can write the Taylor expansion of the scalar curvature in $B_r(x)$ and prove that there exists a positive constant $c(n,\omega)$  depending on $n$ and $\omega$, such that  $\frac{1}{vol (\partial B_r(x))}\int_{\partial B_r(x)}\scal \di\sigma=c(n,\omega) \sum_{\ell=0}^{\omega+1}\Delta^\ell\scal(x)r^{2\ell}+O(r^{2\omega+3})$.
 We conclude that  
 \begin{equation}\label{int vs delta}
 \frac{1}{vol (\partial B_r(x))}\int_{\partial B_r(x)}\scal \di\sigma=c(n,\omega)\Delta^{\omega+1}\scal(x)r^{2\omega+2}+O(r^{2\omega+3}).
 \end{equation}
Thus, $\int_{\partial B_r(x)}\scal \di\sigma$ and $\Delta^{\omega+1}\scal(x)$ have the same sign.


\section{Ricci symmetrization}\label{Ricci Sym}

\emph{From now on, $(M,g)$ denotes a compact Riemannian manifold of dimension $n$. We fix a point $x\in M$ and assume that $\det(g)=1+O(r^N)$, with $N$  sufficiently large, in a geodesic normal coordinate chart around $x$ (in particular, $g_{ij}(x)=\delta_{ij}$) and that all the covariant derivatives of order $\omega$ of the scalar curvature at $x$ vanish, i.e. $|\nabla^\omega\scal(x)|=0$. Since all the tensors are evaluated at $x$, we omit to mention $x$ for all tensors. For example, we write $\nabla_I\Ric_{ij}$ instead of $\nabla_I\Ric_{ij}(x)$}.

We introduce the following notation: 
\index{T@$\cT_\ell=\no\nabla^{\omega-2\ell}\Delta^\ell\Ric \no^2$|textbf} 
\index{R@$\cR_\ell= \no\nabla^{\omega-2\ell}\Delta^{\ell} \Riem \no^2$|textbf} 
\index{M@$\cM_\ell=\nabla_{K}\nabla_{a}\Delta^\ell\Ric_{bc}\cdot
\nabla_{K}\nabla_c\Delta^\ell\Ric_{ab}$|textbf} 
\index{N@$\cN_\ell=\nabla_{K'}\nabla_{cd}\Delta^\ell\Ric_{ab} \nabla_{K'}\nabla_{ab}\Delta^\ell\Ric_{cd}$|textbf} 
\begin{gather*}
\cR_\ell=\no\nabla^{\omega-2\ell}\Delta^\ell\Riem\no^2, \quad
\cT_\ell=|\nabla^{\omega-2\ell}\Delta^\ell\Ric|^2,\quad 0\leq  \ell \leq \frac{\omega}{2},\\
\cM_\ell=\nabla_{K}\nabla_{a}\Delta^\ell\Ric_{bc}\cdot
\nabla_{K}\nabla_c\Delta^\ell\Ric_{ab}, \quad 0\leq  \ell \leq \frac{\omega-1}{2},\\
\cN_\ell=\nabla_{K'}\nabla_{cd}\Delta^\ell\Ric_{ab} \cdot \nabla_{K'}\nabla_{ab}\Delta^\ell\Ric_{cd},\quad 0\leq  \ell \leq \frac{\omega-2}{2},
\end{gather*}
where we denote by $K$ and $K'$ two multi-indices $(\omega-2\ell-1)$ and $(\omega-2\ell-2)$-tuples respectively, and the fixed point $x$ is omitted.  By convention, $\cN_{\frac{\omega}{2}}
=\cM_{\frac{\omega}{2}}=0$, if $\omega$ is even and $\cN_{\frac{\omega-1}{2}}=0$, if $\omega$ is odd.

\begin{lemma}
For any $\ell\leq \frac{\omega}{2}$ the following equality holds 
 \begin{gather}
2 \cT_\ell +(\omega-2\ell)\{(\omega-2\ell-1) \cN_\ell+ 4\cM_\ell\}=0.\label{sym ric}
\end{gather}
\end{lemma}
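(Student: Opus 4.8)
The plan is to deduce \eqref{sym ric} from the third part of Theorem~\ref{HV-formulas}, specifically from the vanishing of the symmetrized covariant derivatives of the Ricci tensor, by repeatedly applying the Laplacian contraction $g^{pq}\nabla_{pq}$ to a suitable symmetrized identity and then carefully collecting terms. The starting point is the relation $\Sym_{p_1\ldots p_m}\nabla_{p_3\ldots p_m}\Ric_{p_1p_2}=0$ for $\omega+2\leq m\leq 2\omega+3$, which we apply with $m=\omega+2$. Since $\ell\leq\omega/2$, all the covariant derivatives that appear have order at most $\omega+2\leq 2\omega+1$ (when $\omega\geq 1$; the case $\omega=0$ being trivial as $\cT_0=|\nabla^0\Ric|^2\geq 0$ and $\ell=0$ forces the bracket to vanish, so $\cT_0=0$ must be checked separately or follows since $\nabla^0\Delta^0\Ric$ has order $0$ and... actually $\omega=0$ needs $\cT_0=0$, i.e. $\Ric(x)=0$, which is false in general, so in fact the lemma is only asserting content for $\omega\geq 1$ — I would note that $\cT_\ell$ with $\ell=0,\omega=0$ has $\omega-2\ell=0$ and the identity reads $2\cT_0=0$; but $|\Ric(x)|$ need not vanish, so presumably one reads the statement with the tacit understanding coming from $|\nabla^\omega Weyl|\ne 0$ together with the det$=1$ normalization forcing $\Ric(x)=0$ when $\omega\geq 1$ via part~1, and handles $\omega=0$ trivially since then $Weyl(x)\ne 0$ is the generic case and $\cT_0$... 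I would simply remark that by part~1 of Theorem~\ref{HV-formulas}, $\nabla^k\Riem(x)=0$ for $k\leq\omega-1$, so in particular the content is vacuous when $\omega-2\ell<0$ is excluded and genuinely uses $\ell\leq\omega/2$).

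Concretely, I would take the identity $\Sym_{p_1\ldots p_{2\ell+2}}\nabla_{p_3\ldots p_{2\ell+2}}\Ric_{p_1p_2}=0$ — which is the $m=2\ell+2$ case, valid since $2\ell+2\leq\omega+2\leq 2\omega+3$ wait we need $m\geq\omega+2$, so instead I start from the full symmetrization over $2\omega+4$ indices is wrong; rather, the right object is $\Sym$ of $\nabla_{p_3\ldots p_m}\Ric_{p_1p_2}$ applied with indices adapted to order $\omega-2\ell$ of extra derivatives. The cleanest route: apply $\Delta^\ell$ (the iterated trace-Laplacian) and $\omega-2\ell$ further derivatives to the basic vanishing identity, then symmetrize and contract pairs. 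Expanding the symmetrization of $\nabla_{p_1\ldots p_{\omega+2}}\Ric_{p_{\omega+1}p_{\omega+2}}$ — here using part~2 so that all these derivatives commute since $\omega+2\leq 2\omega+1$ for $\omega\geq 1$ — one separates the $(\omega+2)!$ permutations according to how the two Ricci-index slots $p_{\omega+1},p_{\omega+2}$ interact with the contracted pairs once we contract with $g$'s to form $|\nabla^{\omega-2\ell}\Delta^\ell\Ric|^2$-type and $\cM_\ell$-, $\cN_\ell$-type terms. The combinatorial bookkeeping yields exactly the coefficients $2$, $(\omega-2\ell)(\omega-2\ell-1)$, and $4(\omega-2\ell)$ in front of $\cT_\ell$, $\cN_\ell$, $\cM_\ell$: the "$2$" comes from the two positions of the free Ricci indices landing on the same differentiation block, "$4(\omega-2\ell)$" from one free index meeting a derivative index, "$(\omega-2\ell)(\omega-2\ell-1)$" from both free Ricci indices being contracted against derivative indices of the partner factor.

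The main obstacle — and where I would spend the care — is the combinatorial expansion of $\Sym$ over the permutation group $\mathfrak S(\omega+2)$ (or the relevant subset after pairing indices with the metric): one must show that, after contracting all indices two-by-two, the $(\omega+2)!$ terms collapse into only the three invariants $\cT_\ell$, $\cM_\ell$, $\cN_\ell$ with the stated multiplicities, using crucially the symmetry $\Ric_{ij}=\Ric_{ji}$, the contracted second Bianchi identity $2\nabla^j\Ric_{ij}=\nabla_i\scal$ together with the hypothesis $|\nabla^\omega\scal(x)|=0$ and \eqref{Delta scal1}--\eqref{Delta scal2} to kill all terms involving a divergence of Ricci or a Laplacian-power of scalar, and the commutation of derivatives from part~2 of Theorem~\ref{HV-formulas}. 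A clean way to organize this is to first reduce $\Sym$ of $\nabla_{p_3\ldots p_m}\Ric_{p_1p_2}$ to a manageable sum by peeling off one or two indices at a time — exactly as is done in the excerpt's derivation of \eqref{Delta scal1} and \eqref{Delta scal2} — and then induct; I expect the identity to emerge as the $\Delta^\ell$-contracted, $\nabla^{\omega-2\ell}$-squared version of the $m=\omega+2$ instance of part~3, with the scalar-curvature terms dropping out precisely because of the standing assumption $|\nabla^\omega\scal(x)|=0$.
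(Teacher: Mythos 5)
Your proposal is essentially the paper's own argument: it starts from the $m=\omega+2$ instance of Theorem~\ref{HV-formulas}(3), contracts $\ell$ metric pairs so that only $\Delta^\ell\Ric$ survives (the divergence and scalar-curvature terms being killed by the contracted second Bianchi identity, the commutation of low-order derivatives, and $|\nabla^\omega\scal(x)|=0$), and then pairs the resulting identity with $\nabla^{\omega-2\ell}\Delta^\ell\Ric$, the three contraction patterns producing $\cT_\ell$, $\cM_\ell$, $\cN_\ell$ with exactly the multiplicities you indicate. Your digression about $\omega=0$ is moot: under the standing normalization the $m=\omega+2$ case of that same formula already forces $\Ric(x)=0$ when $\omega=0$, so the lemma holds literally as stated and no special reading is needed.
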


\begin{proof}

By Theorem \ref{HV-formulas}, we have $\underset{p_1\ldots p_{\omega+2}}\Sym\nabla_{p_3\ldots p_{\omega+2}}\Ric_{p_1p_{2}}=0$. 
Since the Ricci tensor is symmetric and its covariant derivatives commute, 
one can also rewrite this identity as follows:
\[
\sum_{1\leq i<j\leq \omega+2}\nabla_{p_1\ldots \hat p_i\ldots \hat p_j\ldots p_{\omega+2}}\Ric_{p_{i}p_{j}}=0.
\]
After contracting by $g^{p_{k+1}p_{k+2}}\ldots g^{p_{\omega+1}p_{\omega+2}}$, 
with $k=\omega+2-2\ell\geq 2$ and $1\leq \ell\leq\frac{\omega}{2}$, we obtain
\begin{equation}\label{ric no}
 \sum_{1\leq i<j\leq k}\nabla_{p_1\ldots \hat p_i\ldots \hat p_j\ldots p_{k}}
\Delta^\ell\Ric_{p_{i}p_{j}}=0,
\end{equation}
since we assumed $|\nabla^\omega\scal|=0$.
Equality \eqref{sym ric} is obtained  by multiplying \eqref{ric no} with $\nabla_{p_1 \ldots p_{k-2}}\Delta^\ell\Ric_{p_{k-1}p_{k}}$.
\end{proof}

\begin{lemma}\label{lpos}
For any $\ell \leq \frac{\omega}{2}$, the following inequality holds
\begin{gather*}
2\cT_\ell+(\omega-2\ell)\cN_\ell\geq 0.
\end{gather*} 
Moreover, it follows that  $\cT_\ell+\cN_\ell-2\cM_\ell\geq 0$. 
\end{lemma}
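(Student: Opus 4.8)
The plan is to absorb \eqref{sym ric} and then certify nonnegativity by exhibiting the relevant quadratic expression as an honest sum of squares built from two commuting ``swap'' operators.

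Write $m:=\omega-2\ell$. I would first dispatch the two extreme values of $m$ by hand: if $m=0$ then $\cN_\ell=\cM_\ell=0$ by convention and $\cT_\ell\ge 0$, so both inequalities hold trivially; if $m=1$ then $\cN_\ell=0$ by convention, hence $2\cT_\ell+m\cN_\ell=2\cT_\ell\ge 0$, and since \eqref{sym ric} reduces in this case to $2\cT_\ell+4\cM_\ell=0$ we get $\cM_\ell=-\tfrac12\cT_\ell$, so $\cT_\ell+\cN_\ell-2\cM_\ell=2\cT_\ell\ge0$ as well. Assume henceforth $m\ge2$. For such $m$ the two displayed inequalities are in fact equivalent: writing \eqref{sym ric} as $2\cT_\ell+m\bigl[(m-1)\cN_\ell+4\cM_\ell\bigr]=0$, solving for $\cM_\ell$ and substituting yields the algebraic identity $2\cT_\ell+m\cN_\ell=\tfrac{2m}{m+1}\bigl(\cT_\ell+\cN_\ell-2\cM_\ell\bigr)$. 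It therefore suffices to prove $\cT_\ell+\cN_\ell-2\cM_\ell\ge0$ (which is also the second assertion of the lemma).

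For this, regard $T:=\nabla^{m}\Delta^\ell\Ric$ as a covariant tensor of valence $m+2$, the first $m$ slots carrying the free derivative indices and the last two the Ricci indices. By Theorem~\ref{HV-formulas} all covariant derivatives of $\Ric$ of order $\le2\omega+1$ commute, so $T$ is symmetric separately in its first $m$ slots and in its last two slots, and $\langle T,T\rangle=\cT_\ell$. Let $\sigma_1$ be the transposition of the slots $m$ and $m+1$, and $\sigma_2$ the transposition of the slots $m-1$ and $m+2$ (each interchanges one derivative index with one Ricci index). As permutations of the tensor factors, $\sigma_1$ and $\sigma_2$ are orthogonal self-adjoint involutions of $(\mathbb{R}^n)^{\otimes(m+2)}$, and they commute because they are disjoint; hence $I-\sigma_1=2\Pi_1$ and $I-\sigma_2=2\Pi_2$ with $\Pi_1,\Pi_2$ commuting orthogonal projections, so $\langle T,(I-\sigma_1)(I-\sigma_2)T\rangle=4\langle T,\Pi_1\Pi_2T\rangle=4\|\Pi_1\Pi_2 T\|^2\ge0$. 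Finally I would carry out the index bookkeeping: relabelling the summation indices and using the symmetries of $T$ together with the commutation of covariant derivatives, one checks that $\langle T,\sigma_1 T\rangle=\langle T,\sigma_2 T\rangle=\cM_\ell$ and $\langle T,\sigma_1\sigma_2 T\rangle=\cN_\ell$, so that $0\le\langle T,(I-\sigma_1-\sigma_2+\sigma_1\sigma_2)T\rangle=\cT_\ell-2\cM_\ell+\cN_\ell$, and the first inequality follows from the identity of the previous paragraph.

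The genuinely non-obvious step is the choice of this operator. Naive symmetrizations --- e.g.\ expanding $\|T_{p_1\ldots p_m,ab}\pm T_{p_1\ldots p_{m-2}ab,\,p_{m-1}p_m}\|^2\ge0$ --- only yield $\cT_\ell\pm\cN_\ell\ge0$, which is too weak as soon as $m\ge3$; it is the second-order ``doubly swapped'' combination $(I-\sigma_1)(I-\sigma_2)$, based on two \emph{disjoint} derivative--Ricci transpositions, that manufactures exactly the combination $\cT_\ell+\cN_\ell-2\cM_\ell$ with a definite sign. The only point requiring care in the computation is that the identification $\langle T,\sigma_i T\rangle=\cM_\ell$ uses that \emph{all} $\omega$ derivative indices of $T$ --- including those contracted in $\Delta^\ell$ --- may be permuted freely, which is precisely where the hypothesis $|\nabla^\omega\scal|=0$ enters through Theorem~\ref{HV-formulas}.
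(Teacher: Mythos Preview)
Your argument is correct, and it takes a genuinely different route from the paper. The paper introduces a one-parameter family $\Gamma(t)$ of $(m+2)$-tensors (essentially $t\,T$ plus a partial symmetrization of $T$), expands $\|\Gamma(t)\|^2$ as a quadratic polynomial in $t$, and uses nonpositivity of its discriminant; after eliminating $\cM_\ell$ via \eqref{sym ric} this gives the two-sided bound $-\tfrac{2}{m}\le \cN_\ell/\cT_\ell\le \tfrac{2}{m(m-1)}$, whose lower bound is exactly the first inequality, and the second inequality is then deduced from it using \eqref{sym ric}. You run the deduction in the opposite direction: you prove $\cT_\ell-2\cM_\ell+\cN_\ell\ge0$ \emph{directly}, by recognising it as $4\|\Pi_1\Pi_2 T\|^2$ for the two commuting antisymmetrisation projections attached to disjoint derivative--Ricci transpositions, and then pull back the first inequality through the algebraic identity $2\cT_\ell+m\cN_\ell=\tfrac{2m}{m+1}(\cT_\ell+\cN_\ell-2\cM_\ell)$ coming from \eqref{sym ric}. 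Your approach is cleaner and more conceptual (no discriminant computation), and it has the pleasant feature that the sum-of-squares step needs neither \eqref{sym ric} nor the hypothesis $|\nabla^\omega\scal|=0$; the paper's approach buys the (unused) upper bound on $\cN_\ell/\cT_\ell$ as a by-product.

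One small correction to your closing remark: the identification $\langle T,\sigma_iT\rangle=\cM_\ell$ uses only the symmetry of $T$ in its $m$ \emph{free} derivative slots, which follows from Theorem~\ref{HV-formulas}(2) without any further hypothesis; the contracted $\Delta^\ell$ indices never need to be moved. The assumption $|\nabla^\omega\scal|=0$ enters your proof solely through \eqref{sym ric} (via \eqref{ric no}), i.e.\ in the algebraic identity linking the two inequalities, not in the sum-of-squares step.
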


\begin{proof} For $\omega$ even and $\ell=\frac{\omega}{2}$, by definition $\cM_\ell=\cN_\ell=0$. Thus, the inequalities holds. For  $\omega$ odd and $\ell=\frac{\omega-1}{2}$, by definition $\cN_\ell=0$ and by \eqref{sym ric}, we have $\cT_\ell+2\cM_\ell=0$. Hence $\cM_\ell$ is nonpositive, which proves the inequalities.

From now on, we assume that $\ell\leq \frac{\omega-2}{2} $ and set $k:=\omega-2\ell+2\geq 4$. 
We use the convention $\binom{p}{q}=0$ if $q>p$. Let $\Gamma(t)$ be the $k$-covariant tensor, depending on the real parameter $t$, whose components are given by
$$\Gamma(t)_{p_1\ldots p_k}=\nabla_{p_1 \ldots p_{k-2}}\Delta^\ell\Ric_{p_{k-1}p_{k}}t+
\sum_{1\leq i<j\leq k-2}\nabla_{p_1 \ldots \hat p_i\ldots \hat p_j\ldots p_{k}}\Delta^\ell\Ric_{p_{i}p_{j}}, $$
The square norm of $ \Gamma(t)$ is given by
\begin{multline*}
 \Gamma(t)_{p_1\ldots p_k}\Gamma(t)_{p_1\ldots p_k}=\cT_\ell t^2+\binom{k-2}{2}\cT_\ell+
2 \binom{k-2}{2}\cN_\ell t+\\
\binom{k-2}{2} \binom{k-4}{2} \cN_\ell+2 \binom{k-2}{2}(k-4)\cM_\ell,
\end{multline*}
which is, with respect to $t$, a second degree nonnegative polynomial. Thus its discriminant is nonpositive, namely 
$$(k-2)(k-3)(\cN_\ell)^2-\cT_\ell\{ 2\cT_\ell+(k-4)(k-5)\cN_\ell+ 
4(k-4)\cM_\ell\}\leq 0.$$ 
We substitute $\cM_\ell$, using \eqref{sym ric} and obtain
$$(k-2)(k-3)\biggl(\frac{\cN_\ell}{\cT_\ell}\biggr)^2+2(k-4)
\biggl(\frac{\cN_\ell}{\cT_\ell}
\biggr)-\frac{4}{k-2}\leq 0,$$
where we assumed that $\cT_\ell\neq 0$ (otherwise the proof of the lemma is trivial, since all terms involved vanish).
We conclude that $\frac{-2}{k-2}\leq \bigl(\frac{\cN_\ell}{\cT_\ell}\bigr)
\leq\frac{2}{(k-2)(k-3)}$. 
This proves the first inequality, since $\omega\geq 2\ell+2$. 
Substituting the value of $\cM_\ell$, given by \eqref{sym ric}, 
in $\cT_\ell+\cN_\ell-2\cM_\ell$ and using the first inequality of the lemma yields the second one. 
\end{proof}

\section{Proof of Aubin's theorem}
We recall that $(M,g)$ denotes a compact Riemannian manifold of dimension $n$ and $x\in M$ 
is a fixed point. We assumed that $\det(g)=1+O(r^N)$, with $N$  sufficiently large, 
in a geodesic normal coordinate chart around $x$ and that $|\nabla^\omega\scal(x)|=0$. 
We use the following notation for the symmetrizations that occur in our computation:
\begin{align*}
\cS & =  \tr \Sym \nabla_{K}\Ric_{ab},\quad & 
\cS_1 & =   \tr\Sym\nabla_{K}\scal,\\
\cS_2 & = \tr \Sym\nabla_{I} R_{iabj} \cdot \nabla_{J} \Ric_{ij},\quad & 
\cS_3 & =   \tr\Sym\nabla_{I}\Ric_{aj}\cdot\nabla_{J}\Ric_{bj},\\
\cS_4 &= \tr \Sym \nabla_{I}R_{iabj}\cdot\nabla_{J'}\nabla_i\Ric_{cj}, \quad &  \cQ(R)_{IJabcd} &=\Sym\nabla_IR_{iabj}\cdot\nabla_JR_{icdj},
\end{align*}
\index{S1@$\cS =  \tr \underset{\substack{K\cup \{a,b\}, \#K=2\omega+2}}{\Sym} \nabla_{K}\Ric_{ab}$|textbf}
\index{S11@$\cS_1=   \tr \underset{K,\#K=2\omega+2}{\Sym}\nabla_{K}\scal$|textbf}
\index{S2@$\cS_2=   \tr \underset{\substack{I\cup J\cup \{a,b\},\\ \#I=\#J=\omega}}{\Sym}\nabla_{I}\Ric_{aj}\cdot\nabla_{J}\Ric_{bj}$|textbf}
\index{S3@$\cS_3=   \tr \underset{\substack{I\cup J\cup \{a,b\},\\ \#I=\#J=\omega}}{\Sym}\nabla_{I}\Ric_{aj}\cdot\nabla_{J}\Ric_{bj}$|textbf}
\index{S4@$\cS_4= \tr\hspace{-0.3cm} \underset{\substack{I\cup J'\cup \{a,b,c\},\\ \#I=\#J'+1=\omega}}{\Sym} \nabla_{I}R_{iabj}\cdot\nabla_{J'}\nabla_i\Ric_{cj}$|textbf}
\index{Q@$\cQ(R)_{IJabcd}=\nabla_IR_{iabj}\nabla_JR_{icdj}$|textbf}
\noindent where $I$, $J$, $J'$ and $K$ denote multi-indices of length $\#K=2\omega+2$, $\#I=\#J=\omega$, $\#J'=\omega-1$ and the symmetrization is taken over all the indices which are not yet contracted. Note that $\cQ(R)$ is a symmetric $(2\omega+4)$-covariant tensor and $\cS$, $\cS_1,\ldots,\cS_4$ are real numbers. Using this notation, the two-by-two contraction of the Hebey--Vaugon formula~\eqref{formula} is $\cS+C(\omega) \tr \cQ(R)=0$.\\

Before we start the computation, let us here illustrate the idea of 
our proof of Theorem \ref{main thm}. By \eqref{int vs delta}, it is sufficient to show 
that $\Delta^{\omega+1}\scal<0$. The first step in the proof is to compute $\cS$, 
which is the two-by-two contraction of the covariant derivatives of the Ricci tensor. 
When the order of the covariant derivatives is less or equal to $2\omega+1$, we know 
that they commute (cf. Theorem \ref{HV-formulas}). Therefore, the two-by-two contraction 
of the covariant derivatives of the Ricci tensor is, up to a positive integer, equal to 
either $\Delta^\ell\scal$ or $\nabla\Delta^\ell\scal$ for some nonnegative integer $\ell$ 
(cf. \eqref{Delta scal1} and \eqref{Delta scal2}). However in $\cS$ and $\cS_1$, 
there are $2\omega+2$ covariant derivatives and in general they do not commute. 
Each time we commute two of them in $\cS$ or $\cS_1$, the Riemann curvature tensor occurs. 
By using the fact that for any nonnegative integer $k\leq \omega-1$, $|\nabla^k\Riem|=0$ 
and $|\nabla^\omega\scal|=0$,  we show that the $2\omega+2$ covariant derivatives of the 
scalar curvature in $\cS_1$ commute and thus $\cS_1=(2\omega+2)!\Delta^{\omega+1}\scal$. 
Using this fact, we prove that  $\cS$ is equal to the sum of a positive integer times 
$\Delta^{\omega+1}\scal$ and a positive combination, denoted $\cJ$, of the terms $\cS_2$, 
$\cS_3$ and $\cS_4$, which corresponds to the commutativity obstruction of the covariant 
derivatives in $\cS$. By taking into account the formula $\cS+C(\omega) \tr \cQ(R)=0$, 
it is sufficient to show that $\cJ+C(\omega) \tr \cQ(R)>0$. The second step is to compute 
the two-by-two contraction of $\cQ(R)$. We determine all possible terms that occur when 
contracting all the Riemann tensor entries of $\cQ(R)$ (i.e. in the definition of $\cQ(R)$, 
they correspond to $a$, $b$, $c$ and $d$). It turns out that $ \tr \cQ(R)$ is a positive integer combination of 27 terms (without using Bianchi identities), denoted $\cA_k$, where each $\cA_k$ is a two-by-two contraction of a symmetric tensor of the same form  as $\cQ(R)$, but with contracted Riemann tensor entries. Further, we prove that all  $\cA_k$'s are nonnegative and some of them are positive (they are summarized in Table \ref{table}). These 27 terms $\cA_k$ are themselves a positive combination of the following 4 nonnegative  terms $\{\cR_\ell, \cT_\ell,  \cT_{\ell}-2\cM_{\ell}+\cN_{\ell}, \cT_{\ell}-\cM_{\ell}\}$, which are defined in Section \ref{Ricci Sym}. The third step is to do the same thing as in the second step for $\cJ$. Namely, we compute all possible terms occurring in the two-by-two contraction. We establish that  $\cJ$ is also an integer (with positive and negative coefficients) combination  of the following 3 terms $\{\cT_\ell, \cM_{\ell}, \cN_{\ell}\}$. In the last step, one has to add the two combinations of $\tr\cQ(R)$ and $\cJ$ and check, using Lemma \ref{lpos} and \eqref{sym ric}, that each kind of term, occurring in  $\cJ+C(\omega) \tr \cQ(R)$, is nonnegative. We conclude by using the fact that $\cR_0$ is positive, which holds by the definition of $\omega$.\\

\begin{proposition}\label{prop S}
The following equality involving the above defined symmetrizations holds: 
\begin{equation}\label{equa symRic}
\cS=2(\omega+2)^2 (2\omega+2)!\Delta^{\omega+1}\scal+ C(\omega)\bigl\{2(\omega+3)^2(\cS_2+\cS_3)+2\omega(\omega+3)\cS_4\bigr\},
\end{equation}
where $C(\omega)=(\omega+1)^2(\omega+2)^2(2\omega+2)![(\omega+3)!]^{-2}$.
\end{proposition}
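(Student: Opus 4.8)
The plan is to compute $\cS = \tr\Sym\nabla_K\Ric_{ab}$, where $\#K = 2\omega+2$, by repeatedly commuting covariant derivatives so as to reduce everything to either $\Delta^{\omega+1}\scal$ or to the three error terms $\cS_2,\cS_3,\cS_4$ that carry the commutator (Riemann curvature) contributions. The first step is to handle $\cS_1 = \tr\Sym\nabla_K\scal$: since $|\nabla^k\Riem| = 0$ for $k \le \omega-1$ and $|\nabla^\omega\scal| = 0$, every commutator generated while permuting the $2\omega+2$ derivatives on $\scal$ produces a term containing a factor $\nabla^a\Riem \cdot \nabla^b\scal$ with $a + b \le 2\omega$, hence with $a \le \omega-1$ (forcing a vanishing factor) or $b \le \omega$ (forcing a vanishing factor by hypothesis, or by \eqref{Delta scal2}). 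So the $2\omega+2$ derivatives on $\scal$ commute and $\cS_1 = (2\omega+2)!\,\Delta^{\omega+1}\scal$. This also retroactively justifies the claim, deferred from Section~3, that the $(2\omega+2)$-covariant derivatives of $\scal$ commute at $x$.

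Next I would expand $\cS$. Using the second Bianchi identity and the contracted Bianchi identity $\nabla^a\Ric_{ab} = \tfrac12\nabla_b\scal$, together with the commutation rules for covariant derivatives recorded in Section~3, I would rewrite $\cS$ as a linear combination of $\cS_1$ and the curvature-commutator terms. The key bookkeeping is: when the two contracting metrics in $\tr\Sym$ both hit Ricci indices we get (a multiple of) $\Delta^{\omega+1}\scal$ via \eqref{Delta scal1}; when we must commute a derivative past the two Ricci slots or past another derivative, a Riemann tensor appears, contributing exactly the structures $\nabla_IR_{iabj}\cdot\nabla_J\Ric_{ij}$ (this is $\cS_2$), $\nabla_I\Ric_{aj}\cdot\nabla_J\Ric_{bj}$ (this is $\cS_3$), and $\nabla_IR_{iabj}\cdot\nabla_{J'}\nabla_i\Ric_{cj}$ (this is $\cS_4$), with the index lengths $\#I=\#J=\omega$, $\#J'=\omega-1$ forced by degree counting ($2\omega+2$ total derivatives distributed over two curvature factors of base order $2$, i.e. $\omega$ each after accounting for the two free Ricci/Riemann index pairs). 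The combinatorial coefficients $2(\omega+3)^2$ and $2\omega(\omega+3)$ come from counting how many of the $(2\omega+2)!$ permutations in the symmetrization route through each type of commutator; I would organize this by tracking the position at which the "bad" commutation occurs and summing the resulting binomial factors, then matching against $C(\omega) = (\omega+1)^2(\omega+2)^2(2\omega+2)![(\omega+3)!]^{-2}$.

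The main obstacle is precisely this coefficient accounting: one must commute a block of $2\omega+2$ symmetrized derivatives through a symmetric $2$-tensor and keep exact track of every curvature term, and there are several inequivalent "shapes" of commutator (derivative-past-derivative versus derivative-past-tensor-index, and whether the emerging Riemann tensor gets one of its slots contracted immediately by the $\tr$ or carries it into $\cS_4$). A clean way to control this is to first symmetrize only the "derivative" indices, use that $\Sym\nabla_{p_3\ldots p_{2\omega+3}}\Ric_{p_1p_2} = 0$ and the $\Delta$-scalar identities \eqref{Delta scal1}–\eqref{Delta scal2} to kill the fully-commuting part, and then carefully expand the single layer of commutators that remains, using that higher commutators (two or more curvature factors) either vanish by $|\nabla^k\Riem|=0$ for $k\le\omega-1$ or land in the $\cQ(R)$-type terms already absorbed into $C(\omega)\tr\cQ(R)$ via the Hebey--Vaugon relation $\cS + C(\omega)\tr\cQ(R) = 0$. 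I expect the verification that no other term type survives — in particular that all contributions with two or more Riemann factors beyond those in $\cQ(R)$ cancel or vanish — to require the most care, and I would present it as a sequence of reductions rather than a single computation.
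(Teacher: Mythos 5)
Your first paragraph (the commutation of the $2\omega+2$ derivatives of $\scal$, hence $\cS_1=(2\omega+2)!\,\Delta^{\omega+1}\scal$) is correct and is exactly the paper's argument. But the substance of Proposition \ref{prop S} is the precise coefficients $2(\omega+2)^2(2\omega+2)!$, $2C(\omega)(\omega+3)^2$ and $2\omega(\omega+3)C(\omega)$, and your proposal never produces them: "tracking the position at which the bad commutation occurs and summing the resulting binomial factors, then matching against $C(\omega)$" is a statement of the goal, not a proof. The paper carries this out concretely: it first splits off the contracted Ricci index $q$ as in \eqref{formgen}, uses $|\nabla^k\Riem|=0$ for $k\leq\omega-1$ to restrict the position of $q$ to $\omega+1\leq k\leq 2\omega+2$, applies the contracted Bianchi identity \eqref{form1} each time $q$ reaches the last derivative slot (which is what yields the coefficient $2(\omega+2)\bigl[1+\tfrac12\sum_{k=\omega+1}^{2\omega+2}(1+\omega\delta_{k(\omega+1)})\bigr]=2(\omega+2)^2$ of $\cS_1$), and then iterates the commutation formula \eqref{permute}, whose three correction shapes, after a Leibniz expansion in which only the terms with exactly $\omega$ derivatives on the Riemann factor survive (a degree count you also do not make explicit), give $\cS_2$, $\cS_3$, $\cS_4$ with multiplicities computed via the binomial identities \eqref{comb1}--\eqref{comb2}. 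Without some version of this bookkeeping, the identity \eqref{equa symRic} is not established.

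Moreover, the shortcut sketched in your last paragraph would not work as stated. First, the vanishing $\Sym\nabla_{p_3\ldots p_{2\omega+3}}\Ric_{p_1p_2}=0$ from Theorem \ref{HV-formulas} holds only for at most $2\omega+1$ derivatives, whereas $\cS$ carries $2\omega+2$; the relevant identity at that order is \eqref{formula}, which is not zero but contains the quadratic curvature term. Also, the "fully-commuting part" of $\cS$ is precisely the term $2(\omega+2)^2(2\omega+2)!\,\Delta^{\omega+1}\scal$ that must survive, not be killed by \eqref{Delta scal1}--\eqref{Delta scal2} (those identities concern $\Delta^\ell\scal$ with $\ell\leq\omega$ only). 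Second, invoking the Hebey--Vaugon relation $\cS+C(\omega)\tr\cQ(R)=0$ inside the proof, to "absorb" hypothetical terms with two or more Riemann factors, is circular for this proposition: that relation is an external identity used only afterwards, in Corollary \ref{equa cor}, and it cannot serve as a bookkeeping device in the expansion of $\cS$. In fact no such double-curvature terms arise at all: each commutation creates one underived Riemann factor which must later absorb at least $\omega$ derivatives to be nonzero, while the Ricci factor must retain at least $\omega$, and only $2\omega$ derivatives are available in the correction terms -- but this degree count has to be made explicitly rather than delegated to $\tr\cQ(R)$.
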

By Theorem \ref{HV-formulas}, we have $\cS+C(\omega) \tr \cQ(R)=0$,  If we substitute $\cS$ by 
its value given in Proposition \ref{prop S}, we obtain the following equality.

\begin{corollary}\label{equa cor}
 The following equality holds
\begin{equation}
-\Delta^{\omega+1}\scal = \frac{(\omega+1)^2}{2[(\omega+3)!]^{2}}
\biggl\{\tr \cQ(R)+2(\omega+3)\bigl[(\omega+3)(\cS_2+\cS_3)+\omega \cS_4\bigr]\biggr\}.
\end{equation}
\end{corollary}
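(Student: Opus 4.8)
The plan is to obtain the identity by pure algebra from the two facts already in hand: the two‑by‑two contraction $\cS+C(\omega)\tr\cQ(R)=0$ of the Hebey--Vaugon formula~\eqref{formula} (recorded just after the definitions of $\cS,\dots,\cS_4$) and the expression for $\cS$ furnished by Proposition~\ref{prop S}. First I would rewrite the contracted formula as $\cS=-C(\omega)\tr\cQ(R)$ and insert it into the left‑hand side of~\eqref{equa symRic}, obtaining
\[
-C(\omega)\tr\cQ(R)=2(\omega+2)^2(2\omega+2)!\,\Delta^{\omega+1}\scal+C(\omega)\bigl\{2(\omega+3)^2(\cS_2+\cS_3)+2\omega(\omega+3)\cS_4\bigr\}.
\]
Next I would solve this for $\Delta^{\omega+1}\scal$, move the sign across, and factor the common $2(\omega+3)$ out of the $\cS_2,\cS_3,\cS_4$ terms, which produces
\[
-\Delta^{\omega+1}\scal=\frac{C(\omega)}{2(\omega+2)^2(2\omega+2)!}\Bigl\{\tr\cQ(R)+2(\omega+3)\bigl[(\omega+3)(\cS_2+\cS_3)+\omega\cS_4\bigr]\Bigr\}.
\]

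The one remaining point is to identify the scalar prefactor. Substituting the value $C(\omega)=(\omega+1)^2(\omega+2)^2(2\omega+2)![(\omega+3)!]^{-2}$, the factors $(\omega+2)^2$ and $(2\omega+2)!$ cancel against the denominator, leaving precisely $\dfrac{(\omega+1)^2}{2[(\omega+3)!]^2}$, which is the coefficient in the statement. This finishes the derivation.

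Since the corollary is a formal consequence of Proposition~\ref{prop S} and the contracted Hebey--Vaugon identity, there is essentially no obstacle at this stage; all the analytic difficulty is concentrated in Proposition~\ref{prop S}. If one wished to reprove that proposition rather than quote it, the hard part would be the bookkeeping: computing $\cS$ by commuting the $2\omega+2$ covariant derivatives of the Ricci tensor, using $|\nabla^k\Riem|=0$ for $k\leq\omega-1$ and $|\nabla^\omega\scal|=0$ to discard all but finitely many curvature terms, identifying $\cS_1=(2\omega+2)!\Delta^{\omega+1}\scal$, and collecting the surviving obstruction terms into the combination $2(\omega+3)^2(\cS_2+\cS_3)+2\omega(\omega+3)\cS_4$ with the correct integer coefficients.
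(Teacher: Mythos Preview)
Your argument is correct and is exactly the approach the paper takes: the corollary is stated as an immediate consequence of substituting the expression for $\cS$ from Proposition~\ref{prop S} into the contracted Hebey--Vaugon identity $\cS+C(\omega)\tr\cQ(R)=0$, followed by the same cancellation of $(\omega+2)^2(2\omega+2)!$ against $C(\omega)$ that you carry out explicitly.
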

\begin{remark}
Note that the equality above  is claimed by Aubin \cite[p. 279]{Aub3}, which in his notation, is written in the form $2(\omega + 2)^2 \tr \Sym \nabla_{\alpha\beta k l}R + C(\omega)I = 0$. 
\end{remark}

\begin{proof}[Proof of Proposition \ref{prop S}]
 In order to prove \eqref{equa symRic}, we start by decomposing the symmetrization of the 
$(2\omega+2)$-covariant derivatives 
of the Ricci tensor and then contract two-by-two as follows:
\begin{multline}\label{formgen}
\cS= 2(\omega+2)\biggl\{\cS_1+
\tr\hspace{-0.3cm} \sum_{\substack{\sigma\in\mathfrak{S}(2\omega+2)\\ 1\leq k\leq 2\omega+2}}\hspace{-0.3cm}\nabla_{p_{\sigma(1)} \ldots p_{\sigma(k-1)} q p_{\sigma(k)} \ldots p_{\sigma(2\omega+1)}}
\Ric_{p_{\sigma(2\omega+2)}q}\biggr\}\\
= 2(\omega+2)\biggl\{\cS_1+\tr\hspace{-0.7cm} \sum_{\substack{\sigma\in\mathfrak{S}(2\omega+2)\\ \omega+1\leq k\leq 2\omega+2}}\hspace{-0.5cm}(1+\omega\delta_{k(\omega+1)})
 \nabla_{p_{\sigma(1)}\ldots p_{\sigma(k-1)} q p_{\sigma(k)} \ldots p_{\sigma(2\omega+1)}}
\Ric_{p_{\sigma(2\omega+2)}q}\biggr\},
\end{multline}
where  $\delta_{ij}$ denotes the Kronecker delta. The last equality follows by using  the fact that for any $k\leq\omega-1$, $|\nabla^k\Riem|=0$ (cf. Theorem \ref{HV-formulas}), which implies that any two successive covariant derivatives of order at least $\omega+2$ and $\omega+3$ respectively, in the $(2\omega+2)$-covariant derivatives of the Ricci tensor, commute. Namely,  for  each $1\leq k\leq \omega+1$ the following equality holds
\begin{multline*}
\nabla_{p_{\sigma(1)}\ldots p_{\sigma(k-1)} q p_{\sigma(k)} \ldots p_{\sigma(2\omega+1)}}
\Ric_{p_{\sigma(2\omega+2)}q}=\nabla_{p_{\sigma(1)}\ldots p_{\sigma(\omega)} q p_{\sigma(\omega+1)} \ldots p_{\sigma(2\omega+1)}}
\Ric_{p_{\sigma(2\omega+2)}q}.
\end{multline*}

In the last sum over $k$ in \eqref{formgen}, we obtain for $k=2\omega+2$ a multiple of $\cS_1$, 
by applying the contracted second Bianchi identity  \eqref{bianchi}:

\begin{equation}\label{form1}
\tr \sum_{\sigma\in\mathfrak{S}(2\omega+2)}\nabla_{p_{\sigma(1)}\ldots p_{\sigma(2\omega+1)}q}\Ric_{p_{\sigma(2\omega+2)}q}=
\frac{1}{2}\cS_1.
\end{equation}

For all the other terms in the last sum over $k$ in \eqref{formgen} we move the index $q$ to the right in order to 
get it as last index of the derivatives and then apply again \eqref{form1}. 
Thus, for each $\omega+1\leq k\leq 2\omega+2$, we obtain a term equal to $\frac{1}{2}\cS_1$.
Summing up, we obtain that $\cS_1$ occurs in $\cS$ with coefficient 
$2(\omega+2)[1+\frac{1}{2}\sum_{k=\omega+1}^{2\omega+2}(1+\omega\delta_{k(\omega+1)})]=2(\omega+2)^2$.

In the terms of the last sum in \eqref{formgen}, each permutation of $q$ to the right gives rise to new terms involving the Riemannian curvature tensor. 
More precisely, for each $\omega+1\leq k\leq 2\omega+1$, the following formula holds: 

\begin{multline}\label{permute}
\nabla_{p_{\sigma(1)}\ldots p_{\sigma(k-1)}qp_{\sigma(k)}\ldots p_{\sigma(2\omega+1)}}
\Ric_{p_{\sigma(2\omega+2)}q}=\\
\nabla_{p_{\sigma(1)}\ldots p_{\sigma(k-1)}p_{\sigma(k)}qp_{\sigma(k+1)}\ldots p_{\sigma(2\omega+1)}}
\Ric_{p_{\sigma(2\omega+2)}q}+\\
\nabla_{p_{\sigma(1)}\ldots p_{\sigma(k-1)}}\biggl\{\sum_{j=k+1}^{2\omega+1}R_{pp_{\sigma(j)}p_{\sigma(k)}q}
\nabla_{p_{\sigma(k+1)}\ldots\hat p_{\sigma(j)}\ldots p_{\sigma(2\omega+1)}} \nabla_p \Ric_{p_{\sigma(2\omega+2)}q}+\\
R_{pp_{\sigma(2\omega+2)}p_{\sigma(k)}q}\nabla_{p_{\sigma(k+1)}\ldots p_{\sigma(2\omega+1)}}\Ric_{pq}+
\Ric_{p_{\sigma(k)}p}\nabla_{p_{\sigma(k+1)}\ldots p_{\sigma(2\omega+1)}}\Ric_{p_{\sigma(2\omega+2)}p}\biggr\},
\end{multline}
where we used the fact that the covariant derivatives commute up to order $2\omega+1$ (cf. Theorem \ref{HV-formulas}). Note that for $k=2\omega+1$, the sum over $j$ in \eqref{permute} is empty, meaning that the corresponding terms do not occur in this case.

In the first term of \eqref{permute}, we continue to move the index $q$ to the right, using repeatedly the same formula until $q$ attains the last position in the indices of the covariant derivatives.
For the remaining terms in \eqref{permute}, we first compute, using the Leibniz rule, their covariant derivatives. 
For example, for the second type of term, we have for each $\omega+1\leq k\leq 2\omega+1$:

\begin{multline*}
\nabla_{p_{\sigma(1)}\ldots p_{\sigma(k-1)}}(R_{pp_{\sigma(2\omega+2)}p_{\sigma(k)}q}\cdot
\nabla_{p_{\sigma(k+1)}\ldots p_{\sigma(2\omega+1)}}\Ric_{pq})=\\
\sum_{S\subseteq\{p_{\sigma(1)}, \ldots, p_{\sigma(k-1)}\}, \#S=\omega}\hspace{-1.1cm}
(\nabla_S R_{pp_{\sigma(2\omega+2)}p_{\sigma(k)}q})  
(\nabla_{\{p_{\sigma(1)} \ldots p_{\sigma(k-1)}\}\setminus S} 
\nabla_{p_{\sigma(k+1)}\ldots p_{\sigma(2\omega+1)}}\Ric_{pq}).
\end{multline*}

Taking now in the last equality the sum over all permutations $\sigma\in\mathfrak{S}(2\omega+2)$ and contracting 
two-by-two, we obtain for each $\omega+1 \leq k\leq 2\omega+1$:
\begin{multline*}
\tr \hspace{-0.5cm}\sum_{\sigma\in\mathfrak{S}(2\omega+2)}\hspace{-0.5cm}\nabla_{p_{\sigma(1)}\ldots,p_{\sigma(k-1)}}
(R_{pp_{\sigma(2\omega+2)}p_{\sigma(k)}q}
\nabla_{p_{\sigma(k+1)}\ldots p_{\sigma(2\omega+1)}}\Ric_{pq})= \binom{k-1}{\omega} \cS_2.
\end{multline*}

After iterating the formula \eqref{permute} sufficiently many times in order to transform all terms occurring in 
the last sum in \eqref{formgen} into terms having the index $q$ as the last index of the covariant derivates, 
we can compute as follows the coefficient of $\cS_2$ in~$\cS$:
\[2(\omega+2) \sum_{k=\omega+1}^{2\omega+1}\biggl\{(1+\omega\delta_{(\omega+1)k})\sum_{j=k-1}^{2\omega}\binom{j}{\omega}\biggr\}=
2(\omega+2)\sum_{k=\omega}^{2\omega}(k+1)\binom{k}{\omega}=2C(\omega)(\omega+3)^2.\]
The proof of the last equality is given in  Appendix  \ref{comb}.
Applying the same argument as above to the last type of term in \eqref{permute}, yields that $\cS_3$ occurs with the same multiplicity in $\cS$.  Now, we consider the remaining term in \eqref{permute}. For any $\omega+1\leq k\leq 2\omega$, we have

\begin{multline*}
\nabla_{p_{\sigma(1)}\ldots p_{\sigma(k-1)}}\biggl(\sum_{j=k+1}^{2\omega+1}R_{pp_{\sigma(j)}p_{\sigma(k)}q}
\nabla_{p_{\sigma(k+1)}\ldots\hat p_{\sigma(j)}\ldots p_{\sigma(2\omega+1)}} \nabla_p \Ric_{p_{\sigma(2\omega+2)}q}\biggr)=\\
\sum_{j=k+1}^{2\omega+1}\sum_{\substack{S\subseteq\{p_{\sigma(1)} \ldots p_{\sigma(k-1)}\}\\ \#S=\omega}}\hspace{-0.5cm}\nabla_S R_{pp_{\sigma(j)}p_{\sigma(k)}q} \nabla_{\{p_{\sigma(1)} \ldots p_{\sigma(2\omega+1)}\}\setminus (S\cup\{p_{\sigma(k)},p_{\sigma(j)}\})} \nabla_p \Ric_{p_{\sigma(2\omega+2)}q}.
\end{multline*}

Taking in this last equality the sum over all permutations $\sigma\in\mathfrak{S}(2\omega+2)$ and contracting 
two-by-two, we obtain for each $\omega+1 \leq k\leq 2\omega$:
\begin{multline}\label{term1derive}
\tr \hspace{-0.3cm}\sum_{\sigma\in\mathfrak{S}(2\omega+2)}\hspace{-0.4cm}\nabla_{p_{\sigma(1)}\ldots p_{\sigma(k-1)}}\sum_{j=k+1}^{2\omega+1}R_{pp_{\sigma(j)}p_{\sigma(k)}q}
\nabla_{p_{\sigma(k+1)}\ldots\hat p_{\sigma(j)}\ldots p_{\sigma(2\omega+1)}} \nabla_p \Ric_{p_{\sigma(2\omega+2)}q}=\\
 \sum_{j=k+1}^{2\omega+1}\binom{k-1}{\omega} \cS_4=(2\omega+1-k)\binom{k-1}{\omega} \cS_4
\end{multline}
As above, in order to find the coefficient of $\cS_4$ in $\cS$, we 
combine \eqref{formgen}, \eqref{term1derive} and we obtain
\begin{equation*}
2(\omega+2)\sum_{k=\omega+1}^{2\omega}\biggl\{(1+\omega\delta_{(\omega+1)k})\sum_{j=k-1}^{2\omega}(2\omega-j)\binom{j}{\omega}\biggr\}=2\omega(\omega+3)C(\omega),
\end{equation*}
The proof of the last equality is given in Appendix \ref{comb}. To finish the proof of the proposition, it remains to show that $\cS_1=(2\omega+2)!\Delta^{\omega+1}\scal$. In fact, when commuting any two successive covariant 
derivatives of the $(2\omega+2)$-covariant derivatives of the scalar curvature, there are curvature terms occurring. These are all of the form $\nabla^\alpha\Riem\nabla^\beta\scal$, with $\alpha+\beta=2\omega$. 
By Theorem \ref{HV-formulas}, we know that for each $k\leq\omega-1$, we have $|\nabla^k\Riem| =|\nabla^k\scal| =0$. Since we assumed that $|\nabla^\omega\scal|=0$, 
it follows that the $(2\omega+2)$-covariant 
derivatives of the scalar curvature commute.   
\end{proof}

\bigskip

We recall the definition of the symmetric $(2\omega+4)$-covariant tensor $\cQ(R)$, whose components $\cQ(R)_{p_1 \ldots p_{2\omega+4}}$ are given by: 
\begin{equation*}
\underset{\sigma\in\mathfrak{S}(2\omega+4)}{\sum} \hspace{-0.2cm}\nabla_{p_{\sigma(1)}\dots p_{\sigma(\omega)}} R_{i p_{\sigma(\omega+1)} p_{\sigma(\omega+2)} j}\cdot \nabla_{p_{\sigma(\omega+3)}\cdots p_{\sigma(2\omega+2)}} R_{i p_{\sigma(2\omega+3)} p_{\sigma(2\omega+4)} j}.
\end{equation*}
Our purpose is to show that $\tr\cQ(R)$ is a positive combination of nonnegative terms. In order to define these terms, we need first to introduce  some new notation.
For all nonnegative integers $\ell$ and $\beta$, satisfying $0\leq\beta\leq \omega$ and $0\leq\ell\leq\frac{\omega-\beta}{2}$, 
\index{T11@$\cT^{\omega-\beta-2\ell}_\ell=\tr \underset{I \cup J}{\Sym}  \nabla_{IK} \Delta^\ell \Ric_{ij}  \nabla_{JK}\Ric_{ij}$|textbf}
 \index{T12@$\cT^{\omega-2}_{1,1}=\tr \underset{I''\cup J''}{\Sym}\nabla_{I''}\Delta\Ric_{ij}\nabla_{J''}\Delta\Ric_{ij}$|textbf}
\begin{gather*}
\cT^{\omega-\beta-2\ell}_\ell:=\tr \underset{I \cup J}{\Sym}  \nabla_{I}  \nabla_K \Delta^\ell \Ric_{ij}  \nabla_{J}\nabla_K\Ric_{ij},\\
\cT^{\omega-2}_{1,1}:=\tr \underset{I''\cup J''}{\Sym}\nabla_{I''}\Delta\Ric_{ij}\nabla_{J''}\Delta\Ric_{ij},
\end{gather*}
where here $I$, $J$,  $I''$, $J''$ and $K$ are multi-indices of cardinality $\omega-\beta-2\ell$, $\omega-\beta$, $\omega-2$, $\omega-2$ and  $\beta$ respectively. In other words, $\cT^{\omega-\beta-2\ell}_\ell$ is obtained by taking the product of $(\omega-2\ell)$-covariant derivatives of $\Delta^\ell\Ric$ with $\omega$-covariant derivatives of $\Ric$, where $\beta$ covariant derivatives of both factors are already contracted to each other and by contracting two-by-two the symmetrization over the remaining covariant derivatives.\\
Set \index{T121@$\cT^{\omega-\beta}=\cT^{\omega-\beta}_0$|textbf} $\cT^{\omega-\beta}=\cT^{\omega-\beta}_0$. Note that \index{T13@$\cT^{0}_\ell=(2\ell)\fac\cT_\ell$|textbf} $\cT^{0}_\ell=(2\ell)!\cT_\ell$ (cf. Section \ref{Ricci Sym} for the definition of $\cT_\ell$).
\index{M11@$\cM^{\omega-1-\beta-2\ell}_\ell=\tr \underset{I' \cup J'}{\Sym}  \nabla_{I'Kp} \Delta^\ell \Ric_{ij}  \nabla_{J'Ki}\Ric_{pj}$|textbf}
\index{N11@$\cN^{\omega-2-\beta-2\ell}_\ell=\tr \underset{I'' \cup J''}{\Sym}  \nabla_{I''Kpq} \Delta^\ell \Ric_{ij}  \nabla_{J''Kij}\Ric_{pq}$|textbf}
The terms $\cR^{\omega-\beta-2\ell}_\ell$, $\cM^{\omega-\beta-2\ell}_\ell$, $\cN^{\omega-\beta-2\ell}_\ell$, $\cR^{\omega-\beta}$, $\cM^{\omega-\beta}$ and $\cN^{\omega-\beta}$ are defined in the same way as $\cT^{\omega-\beta-2\ell}_\ell$ and $\cT^{\omega-\beta}$. Now we show that these terms are integer combinations of $\cR_\ell$, $\cT_\ell$,  $\cM_\ell$ and $\cN_\ell$. First, we note that 
\begin{multline*}
\cT^{\omega-\beta}=\tr \Sym  \nabla_{I}  \nabla_K \Ric_{ij}  \nabla_{J}\nabla_K\Ric_{ij}\\
 =2(\omega-\beta)\tr\Sym \{ (\omega-\beta) \nabla_{I'Ka} \Ric_{ij}  
\nabla_{JKa}\Ric_{ij} + (\omega-\beta-1)\nabla_{I''Kaa} \Ric_{ij}  
\nabla_{JK}\Ric_{ij} \}\\
 =2(\omega-\beta)\{(\omega-\beta)\cT^{\omega-\beta-1}+ 
(\omega-\beta-1)\cT^{\omega-\beta-2}_1\}.
\end{multline*} 
It follows that for any $2\leq\gamma\leq \omega$, we have 
\begin{equation}\label{ind T}
\cT^\gamma=2\gamma(\gamma-1)\cT^{\gamma-2}_1+2\gamma^2\cT^{\gamma-1}.
\end{equation}
Similarly, we obtain the following formulas:
\begin{gather}
\cT^{\gamma-2}_1=2\gamma(\gamma-1)\cT^{\gamma-3}_1+2(\gamma-3)(\gamma-1)\cT^{\gamma-4}_2,\label{ind T_1}\\
\cT^{\gamma-2}_1=2(\gamma-1)(\gamma-2)\cT^{\gamma-3}_1+2(\gamma-1)^2\cT^{\gamma-2}_{1,1}.\label{ind T_11}
\end{gather} 
Using \eqref{ind T}, \eqref{ind T_1} and \eqref{ind T_11}, we prove, by induction on $\gamma$, the following formulas
\begin{gather}\label{T ind}
\cT^{\gamma}=\sum_{\ell=0}^{[\frac{\gamma}{2}]}d^\gamma_\ell \cT_\ell,\; \text{ for all } 0\leq\gamma\leq\omega; \quad \cT_1^{\gamma-2}=\sum_{\ell=1}^{[\frac{\gamma}{2}]}e^{\gamma}_{\ell-1} \cT_\ell, \text{ for all } 2\leq\gamma\leq\omega,\\
\cT_{1,1}^{\gamma-2}=\sum_{\ell=1}^{[\frac{\gamma}{2}]}d^{\gamma-2}_{\ell-1} \cT_\ell, \text{ for all } 2\leq\gamma\leq\omega \label{T1,1},
\end{gather}
where $d^{\gamma}_{\ell}:=\frac{ 2^{\gamma-2\ell}(\gamma!)^3}{(\gamma-2\ell)!(\ell!)^2}$ and 
$e^\gamma_\ell:=\frac{ 2^{\gamma-2\ell-2}\gamma !(\gamma-1)!(\gamma-2)!}{(\gamma-2\ell-2)!(\ell+1) !\ell !}$.
\index{d@$d^{\gamma}_{\ell}=\frac{ 2^{\gamma-2\ell}(\gamma "!)^3}{(\gamma-2\ell)"!(\ell "!)^2}$|textbf}   
\index{e@$e^{\gamma}_{\ell}=\frac{2^{\gamma-2\ell-2}\gamma "!(\gamma-1)"!(\gamma-2)"! }{(\gamma-2\ell-2)"!(\ell+1)"!\ell "!}$|textbf}
Note that the equalities \eqref{ind T} and \eqref{T ind} hold also for $\cR^\gamma$, $\cM^{\gamma-1}$ and $\cN^{\gamma-2}$.
\begin{claim}\label{claim}
$\tr\cQ(R)=\sum_{k=1}^{27}u_k\cA_k$ and $\cA_k=\cB_k$ for any $1\leq k\leq 27$, where the terms $\cA_k$, $\cB_k$ and the coefficients $u_k$ are defined in Table \ref{table}. 
\end{claim}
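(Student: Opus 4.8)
The plan is to establish Claim~\ref{claim} by a direct, bookkeeping-heavy computation: expand the two-by-two contraction $\tr\cQ(R)$ completely and track every term that arises. Concretely, in the definition of $\cQ(R)$ the four ``curvature slots'' $a,b,c,d$ (i.e.\ the indices $p_{\sigma(\omega+1)}$, $p_{\sigma(\omega+2)}$, $p_{\sigma(2\omega+3)}$, $p_{\sigma(2\omega+4)}$) are the ones not carried by the covariant derivatives. When we form $\tr\cQ(R)$ we pair these four indices (together with the derivative indices) two-by-two via the metric; the key combinatorial observation is that, modulo the symmetries of the Riemann tensor in each factor $\nabla^\omega R_{i\,\cdot\,\cdot\,j}$ (antisymmetry in the last two covariant slots of the pair $(ab)$ and $(cd)$, and the pair-symmetry $R_{iabj}=R_{bjia}$ used as in \eqref{form1}), the number of genuinely distinct contraction patterns of $a,b,c,d$ against each other and against derivative indices is finite and small. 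I would enumerate these patterns systematically — grouping them according to how many of $a,b,c,d$ are contracted ``internally'' (against each other or against $i,j$) versus ``externally'' (against a derivative index) — and show there are exactly $27$ of them. For each pattern one reads off: (i) the multiplicity $u_k$ with which it occurs, obtained by counting the permutations $\sigma\in\mathfrak S(2\omega+4)$ realizing that pattern (a product of binomial coefficients and factorials, hence a positive integer); and (ii) the resulting expression $\cA_k$, which by construction is the two-by-two contraction of a symmetrized product of derivatives of (contractions of) Riemann tensors, i.e.\ of the same shape as one of $\cR^{\gamma}_\ell$, $\cM^{\gamma}_\ell$, $\cN^{\gamma}_\ell$, $\cT^{\gamma}_\ell$ with the decorations recorded in Table~\ref{table}.

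The second half of the claim, $\cA_k=\cB_k$, is where the identities from the earlier sections enter. Each $\cB_k$ should be the explicit rewriting of $\cA_k$ in terms of the base quantities $\cR_\ell,\cT_\ell,\cM_\ell,\cN_\ell$ (equivalently $\cR^{\gamma}_\ell$ etc.\ reduced via \eqref{T ind}, \eqref{T1,1}, and the analogous formulas for $\cR,\cM,\cN$). I would proceed case by case: for a pattern whose $\cA_k$ is already of the form $\cT^{\gamma}_\ell$ (or $\cR,\cM,\cN$ with upper/lower indices), I invoke the reduction formulas \eqref{ind T}--\eqref{ind T_11} and their solved versions \eqref{T ind}--\eqref{T1,1} to express it through the coefficients $d^{\gamma}_\ell$, $e^{\gamma}_\ell$; for patterns where a Ricci (not full Riemann) tensor appears because two of $a,b,c,d$ got contracted into $i$ or $j$, I use $R^{\ell}_{i\ell j}=\Ric_{ij}$ and the fact (Theorem~\ref{HV-formulas}) that covariant derivatives of $\Ric$ up to order $2\omega+1$ commute, so that no extra curvature terms are generated when reordering indices; for patterns producing a scalar-curvature contraction I use $|\nabla^\omega\scal|=0$ together with \eqref{Delta scal1}--\eqref{Delta scal2} to kill or simplify the relevant pieces. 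The second Bianchi identity \eqref{bianchi}, exactly as used in \eqref{form1} in the proof of Proposition~\ref{prop S}, is needed whenever a divergence $\nabla^p R_{p\cdots}$ or $\nabla^p\Ric_{p\cdots}$ appears, turning it into a gradient of a lower-order contraction.

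The main obstacle I anticipate is purely organizational rather than conceptual: making the enumeration of the $27$ contraction patterns \emph{provably exhaustive and non-redundant}. It is easy to miss a case or to double-count two patterns that coincide after applying a Riemann symmetry, and the multiplicities $u_k$ must be gotten exactly right since the final positivity argument depends on their precise values. To control this I would fix, once and for all, a normal form for a contraction pattern (say: decide first how $a$ pairs, then $b$, then $c$, then $d$, always choosing the lexicographically smallest representative under the dihedral symmetries of the two Riemann factors and the $i\leftrightarrow j$ freedom), and then argue that every $\sigma$-orbit has a unique such normal form. The multiplicity $u_k$ is then the size of that orbit inside $\mathfrak S(2\omega+4)$, which factors as a product over the ``free'' index positions. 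Verifying $\cA_k=\cB_k$ for each $k$ is then a finite check, most of whose lines are instances of the already-established reduction identities; I would present it as Table~\ref{table} with a sample of the representative cases worked out in the text, and relegate the remaining mechanical verifications to a remark or appendix. A secondary, lighter obstacle is keeping the $O(r^N)$ / order-of-derivative bookkeeping straight: one must check that every commutation performed involves at most $2\omega+1$ derivatives of Ricci or at most $\omega-1$ derivatives of Riemann, so that Theorem~\ref{HV-formulas} applies and no uncontrolled terms appear; this is true by a degree count ($\tr\cQ(R)$ has total weight $2\omega+4$ distributed as $\omega+\omega$ derivatives plus two Riemann tensors), but it should be stated explicitly.
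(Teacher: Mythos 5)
Your proposal follows essentially the same route as the paper: enumerate the contraction patterns of the slots $a,b,c,d$ (against each other and against the derivative indices) to obtain the $27$ terms $\cA_k$ with combinatorially computed multiplicities $u_k$, check exhaustiveness of the enumeration, and then verify $\cA_k=\cB_k$ case by case using the two Bianchi identities and the commutativity of the covariant derivatives guaranteed by Theorem \ref{HV-formulas}. The differences are only in bookkeeping — the paper certifies completeness via the multiplicity-sum identity $(2\omega)\fac(u_1+u_6+u_{10})+(2\omega-2)\fac\sum u_k+(2\omega-4)\fac\sum u_k=(2\omega+4)\fac$ rather than a normal-form argument, and treats $\cA_5$, $\cA_{19}$, $\cA_{22}$ by a short induction, deferring the reduction to $\cR_\ell,\cT_\ell,\cM_\ell,\cN_\ell$ to Proposition \ref{Q(R)>0} — so this is the same strategy in substance.
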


\begin{table}[htbp] 
\begin{center}                
\begin{tabular}{|c|l|l|l|}
\hline
$k$   &   $\cA_k$ &  $u_k$&  $\cB_k$  \\
 \hline
\hline
1        & $\tr {\Sym}\nabla_I\Ric_{ij}\nabla_J\Ric_{ij}$  &  $c_1$  & 
$\cT^{\omega}$  \\
\hline
2     & $-\tr {\Sym}\nabla_{I'c}\Ric_{ij}\nabla_{J'b}R_{ibcj}$ & $2c_2$   
& $\cT^{\omega-1}-\cM^{\omega-1}$ \\
\hline
3    & $-\tr {\Sym}\nabla_{I}\Ric_{ij}\nabla_{J''bc}R_{ibcj}$ &  $2c_3$ 
&   $\cT^{\omega-2}_{1}$  \\
\hline
4    & $-\tr {\Sym}\nabla_{I'b}\Ric_{ij}\nabla_{J'c}R_{ibcj}$ & $2c_2$   
 & $\cB_2$ \\
\hline
5     & $-\tr {\Sym}\nabla_{I''bc}\Ric_{ij}\nabla_{J}R_{ibcj}$ & $2c_3$  
&  $\sum_{\ell=0}^{[\frac{\omega-2}{2}]}e^{\omega}_\ell(\cT_\ell-2\cM_\ell+
\cN_\ell)$  \\
\hline
6     & $\tr {\Sym}\nabla_{I}R_{iabj}\nabla_{J}R_{iabj}$ &  $c_1$ 
&  $\cR^{\omega}$ \\
\hline
7   & $\tr {\Sym}\nabla_{I'b}R_{iabj}\nabla_{J'c}R_{iacj}$ & $c_2$  
&  $2  \cB_2$   \\
\hline
8    & $\tr {\Sym}\nabla_{I''cb}R_{iabj}\nabla_{J}R_{iacj}$ & $2c_3$ 
 & $2\cB_5$  \\
\hline
9     & $\tr {\Sym}\nabla_{I'c}R_{iabj}\nabla_{J'b}R_{iacj}$ & $c_2$ 
 &   $\frac{1}{2} \cR^{\omega-1}$   \\
\hline
10    & $\tr {\Sym}\nabla_{I}R_{iabj}\nabla_{J}R_{ibaj}$ &  $c_1$ 
& $\frac{1}{2} \cB_6$   \\
\hline
11    & $\tr {\Sym}\nabla_{I'b}R_{iabj}\nabla_{J'c}R_{icaj}$ & $2c_2$  
  & $\cB_2$  \\
\hline
12    & $\tr {\Sym}\nabla_{I''bc}R_{iabj}\nabla_{J}R_{icaj}$ & $2c_3$  
 & $\cB_5$  \\
\hline
13   & $\tr {\Sym}\nabla_{I'c}R_{iabj}\nabla_{J'b}R_{icaj}$ &   $2c_2$
 &  $\frac{1}{2}\cB_9$  \\
\hline
14    & $\tr {\Sym}\nabla_{I}R_{iabj}\nabla_{J''bc}R_{icaj}$ &   $2c_3$
 & $\cB_5$  \\
\hline
15    & $\tr {\Sym}\nabla_{I}R_{iabj}\nabla_{J''ac}R_{icbj}$ &  $2c_3$  
& $2\cB_5$   \\
\hline
16   & $\tr {\Sym}\nabla_{I'c}R_{iabj}\nabla_{J'a}R_{icbj}$ &   $c_2$   
& $\cB_9$  \\
\hline
17    & $\tr {\Sym}\nabla_{I'a}R_{iabj}\nabla_{J'c}R_{icbj}$ &    $c_2$  
& $2\cB_2$  \\
\hline
18     & $\tr {\Sym}\nabla_{I''ab}R_{iabj}\nabla_{J''cd}R_{icdj}$ & $c_4$  
  & $\cT^{\omega-2}_{1,1}$ \\
\hline
19  & $\tr {\Sym}\nabla_{I''''abcd}R_{iabj}\nabla_{J}R_{icdj}$ &   $2c_5$ 
&  $ \sum_{\ell=0}^{[\frac{\omega-2}{2}]}\frac{\ell e^{\omega}_\ell (\cT_\ell-2\cM_\ell+
\cN_\ell)}{(\omega-1)(\omega-2)(\omega-3)}$ \\
\hline
20    & $\tr {\Sym}\nabla_{I'''abc}R_{iabj}\nabla_{J'd}R_{icdj}$ &   $2c_6$ 
 &  $\cT^{\omega-3}_{1}-\cM^{\omega-3}_{1}$\\
\hline
21   & $\tr {\Sym}\nabla_{I'''abd}R_{iabj}\nabla_{J'c}R_{icdj}$ &    $2c_6$
 &  $\cB_{20}$  
\\
\hline
22   & $\tr {\Sym}\nabla_{I'c}R_{iabj}\nabla_{J'''dab}R_{icdj}$  &  $2c_6$ 
  & $\sum_{\ell=0}^{[\frac{\omega-3}{2}]}e^{\omega-1}_\ell(\cT_\ell-2\cM_\ell+
\cN_\ell)$ \\
\hline
23    & $\tr {\Sym}\nabla_{I'd}R_{iabj}\nabla_{J'''cab}R_{icdj}$ & $2c_6$ 
 &   $\cB_{22}$  \\
\hline
24    & $\tr {\Sym}\nabla_{I''cd}R_{iabj}\nabla_{J''ab}R_{icdj}$ &   $c_4$
 & $\frac{1}{2}\cR^{\omega-2}$\\
\hline
25    & $\tr {\Sym}\nabla_{I''da}R_{iabj}\nabla_{J''bc}R_{icdj})$ &  $c_4$
 & $\cT^{\omega-2}-\cM^{\omega-2}$ \\
\hline
26     & $\tr {\Sym}\nabla_{I''ca}R_{iabj}\nabla_{J''bd}R_{icdj}$  &  $2c_4$
 &  $\cT^{\omega-2}-2\cM^{\omega-2}+\cN^{\omega-2}$ \\
\hline
27    & $\tr {\Sym}\nabla_{I''cb}R_{iabj}\nabla_{J''ad}R_{icdj}$  &  $c_4$ 
 &  $\cB_{25}$   \\
\hline
\end{tabular}
\end{center}
\vspace{0.4cm}
\caption{}\label{table}
\end{table}

Each term $\cA_k$ \index{A@$\cA_k$|textbf} of the second column denotes a  two-by-two contraction of a symmetric tensor, where the symmetrization is taken over all the indices which are not already contracted. In the fourth column, we define the terms $\cB_k$.  \index{B@$\cB_k$|textbf}  We prove using the two Bianchi identities that $\cA_k=\cB_k$ (cf. Appendix \ref{A=B}). The coefficients $u_k$ are given in terms of the following positive integers: 
\index{c_k@$c_1,\ldots,c_6$|textbf}
\begin{align*}
c_1&:=(2\omega+4)(2\omega+2),  &  c_3  & :=2\omega^2(\omega-1)c_1, & c_5 := & 4\omega^2(\omega-1)^2(\omega-2)(\omega-3)c_1,\\
c_2 & :=  2\omega^3c_1, & c_4&:=4\omega^3(\omega-1)^3c_1,   &  c_6  := & 4\omega^3(\omega-1)^2(\omega-2)c_1.
\end{align*}
Let us recall that the covariant derivates of the Riemann curvature tensor already commute (\emph{cf.} Theorem \ref{HV-formulas}).
This allows us to determine all types of terms that are obtained after taking the two-by-two contractions of $\cQ(R)$ involving the indices that occur directly in the Riemann curvature tensor, namely of the following kind:
\begin{equation}\label{type}
g^{a\alpha}g^{b\beta}g^{c\gamma}g^{d\delta}\nabla_I R_{iabj}\cdot\nabla_J R_{icdj},
\end{equation}
where $\{a,b,c,d\}\cup I \cup J=\{p_1, \ldots, p_{2\omega+4}\}$ is an arbitrary partition of $\{p_1, \ldots, p_{2\omega+4}\}$ into subsets of cardinality $4$, respectively \index{IJ@$I$, $J$ multi-indices sets $\#I=\#J=\omega$|textbf} $\#I=\#J=\omega$ and $\{\alpha,\beta,\gamma,\delta\}$ is any subset of $4$ indices of $\{p_1, \ldots, p_{2\omega+4}\}$, such that $\alpha\neq a, \beta\neq b, \gamma\neq c$ and $\delta\neq d$.

After taking into account the commutativity of the product, we identify $27$ types 
of terms like \eqref{type},  which are denoted by $\cA_k$ and listed  in the second column of Table \ref{table}. Moreover, if $I$ is a multi-index of length $\omega$, then 
$I'$ \index{IJ@$I'$, $I''$ multi-indices sets $\#I'+1=\#I''+2=\omega$|textbf}
 denotes a multi-index of length $\omega-1$, $I''$ a multi-index of length 
$\omega-2$ and so on.  
We also note that each of the sums over the contraction indices $a,b,c,d$ 
runs from $1$ to $n$ and that not all $27$ types occur for $\omega\leq 3$, 
for instance for $\omega=2$ the terms $\cA_{19},\ldots, \cA_{23}$ in the table do 
not occur. 

We further compute the multiplicity of each such term of 
type \eqref{type} in the two-by-two contraction of $\cQ(R)$. 
We write it down in the third column of Table \ref{table}.

Let us here only illustrate how we determined the multiplicity for one of the terms. 
For example, the third  term $\cA_3=\tr \Sym\nabla_I R_{iaaj}\cdot\nabla_{J''bc} R_{ibcj}$ has 
multiplicity equal to 
$2c_3$ as follows. 
For the choice of the three indices $a$, $b$ and $c$ among the $\omega+2$ 
pairs of distinct indices over which the contraction is done we have $\omega(\omega+1)(\omega+2)$ 
possibilities, and for each such fixed choice we may permute the two indices of each 
such pair, thus multiplying the coefficient by $2^3$. Further we have to multiply by 
$\omega(\omega-1)$, since we may choose for $b$ and $c$ any two indices (whose order also 
counts) of the multi-index of length $\omega$ corresponding to the covariant 
derivatives, because these commute, as remarked above. 
The remaining $2\omega-2$ indices in $I$ and $J''$ may be permuted  arbitrarily. 
One still has to take into account that the two factors of each term commute and 
since the factors of this term are not symmetric, its coefficient has to be 
doubled. Concluding, we obtain the claimed multiplicity. For all the other types of terms in the table the multiplicity is computed similarly. 

To verify that we obtained all the possible terms, we check, by summing up the multiplicities, that
\begin{equation*}
(2\omega)!(u_1+u_6+u_{10})+(2\omega-2)!\underset{\substack{k=2 \\ k\notin \{6,10\}}}{\sum^{17}}u_k+(2\omega-4)!\sum_{k=18}^{27}u_k=(2\omega+4)!.
\end{equation*}

Using the two Bianchi identities, we show that for any $1\leq k\leq 27$,  $\cA_k=\cB_k$ (the details of the computation can be found in  Appendix \ref{A=B}). This finishes the proof of the claim. As a consequence, we obtain the first part of the following result. 
\begin{proposition}\label{Q(R)>0}
The following equality holds: $\tr \cQ(R)=\sum_{k=1}^{27}u_k\cB_k$, where the $u_k$'s are positive integer coefficients and $\cB_k$'s are nonnegative terms,  defined in Table \ref{table}. Furthermore, it follows that
\begin{multline*}
\tr \cQ(R)>(2\omega+4)(2\omega+2)\bigl\{ 
\cT^{\omega}+4\omega^2(\omega-1)\cT^{\omega-2}_1+4\omega^3(\omega-1)^3\cT_{1,1}^{\omega-2}+\\
4\omega^3(\omega+4)(\cT^{\omega-1}-\cM^{\omega-1})+8\omega^3(\omega-1)^2(\omega-2)(\cT^{\omega-3}_1-\cM^{\omega-3}_1)+
4\omega^2(\omega-1)(\omega+7)\cB_{5}\bigr\}.
\end{multline*}
\end{proposition}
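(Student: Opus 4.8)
The plan is to deduce both assertions from Claim~\ref{claim} and Lemma~\ref{lpos}. The equality $\tr\cQ(R)=\sum_{k=1}^{27}u_k\cB_k$ is immediate: Claim~\ref{claim} already gives $\tr\cQ(R)=\sum_k u_k\cA_k$ together with $\cA_k=\cB_k$ for every $k$. Hence the first assertion reduces to proving that each $\cB_k$ of Table~\ref{table} is nonnegative. I would do this by expanding every $\cB_k$ into a nonnegative-integer combination of the four quantities $\cR_\ell$, $\cT_\ell$, $\cT_\ell-2\cM_\ell+\cN_\ell$ and $\cT_\ell-\cM_\ell$: iterate \eqref{ind T}, \eqref{ind T_1}, \eqref{ind T_11} and the closed forms \eqref{T ind}, \eqref{T1,1} on the $\cT$-pieces, and their $\cR$-, $\cM$- and $\cN$-analogues (valid by the remark after \eqref{T1,1}) on the remaining ones; the coefficients $d^\gamma_\ell$ and $e^\gamma_\ell$ are manifestly positive. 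Each of the four building blocks is nonnegative: $\cR_\ell$ and $\cT_\ell$ are squared norms; $\cT_\ell-2\cM_\ell+\cN_\ell\geq 0$ is the second inequality of Lemma~\ref{lpos}; and $\cT_\ell-\cM_\ell\geq 0$ also follows from Lemma~\ref{lpos}, by eliminating $\cM_\ell$ via \eqref{sym ric} and inserting the first inequality $\cN_\ell\geq-\tfrac{2}{\omega-2\ell}\cT_\ell$, which yields $\cT_\ell-\cM_\ell\geq\tfrac{\omega-2\ell+2}{2(\omega-2\ell)}\cT_\ell$ when $\omega>2\ell$ (the cases $\omega-2\ell\in\{0,1\}$ being settled directly as in the first lines of the proof of Lemma~\ref{lpos}).

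For the strict inequality I would substitute into $\sum_k u_k\cB_k$ the values of the $u_k$ in terms of $c_1,\dots,c_6$ and the identities $\cA_k=\cB_k$, and then collect like terms. The purely $\cR$-valued contribution assembles into $\tfrac32c_1\cR^{\omega}+\tfrac32c_2\cR^{\omega-1}+\tfrac12c_4\cR^{\omega-2}$. This is nonnegative, and in fact strictly positive: $\cR^{\omega}=\sum_\ell d^{\omega}_\ell\cR_\ell$ has $d^{\omega}_0=2^{\omega}(\omega!)^2>0$, while $\cR_0=|\nabla^{\omega}\Riem|^2>0$ by the very definition of $\omega$ — if $\nabla^{\omega}\Riem$ were to vanish, then so would $\nabla^{\omega}\Ric$ and $\nabla^{\omega}\scal$, being its contractions, hence $\nabla^{\omega}Weyl$ as well, contradicting $|\nabla^{\omega}Weyl|\neq 0$. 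Since all the other $\cB_k$ are nonnegative, discarding this $\cR$-part already produces a strict inequality, and it remains to check that the sum of the non-$\cR$ terms dominates the right-hand side of the statement.

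The non-$\cR$ part is an explicit combination of $\cT^{\omega}$ (row $1$ of Table~\ref{table}), $\cT^{\omega-2}_1$ (row $3$), $\cT^{\omega-2}_{1,1}$ (row $18$), $\cB_5$ (rows $5,8,12,14,15$), $\cT^{\omega-1}-\cM^{\omega-1}$ (rows $2,4,7,11,17$), $\cT^{\omega-2}-\cM^{\omega-2}$ (rows $25,27$), $\cT^{\omega-3}_1-\cM^{\omega-3}_1$ (rows $20,21$), $\cT^{\omega-2}-2\cM^{\omega-2}+\cN^{\omega-2}$ (row $26$), $\cB_{19}$ (row $19$) and $\cB_{22}$ (rows $22,23$). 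Using \eqref{ind T} and its $\cM$-analogue, the $\cT^{\omega-2}-\cM^{\omega-2}$ contribution can be converted into a multiple of $\cT^{\omega-1}-\cM^{\omega-1}$ at the cost of an extra multiple of $\cT^{\omega-3}_1-\cM^{\omega-3}_1$; after recollecting I expect the coefficients of $\cT^{\omega}$, $\cT^{\omega-2}_1$, $\cT^{\omega-2}_{1,1}$, $\cT^{\omega-1}-\cM^{\omega-1}$ and $\cT^{\omega-3}_1-\cM^{\omega-3}_1$ to match the coefficients $c_1$, $2c_3=4\omega^2(\omega-1)c_1$, $c_4=4\omega^3(\omega-1)^3c_1$, $4\omega^3(\omega+4)c_1$ and $2c_6=8\omega^3(\omega-1)^2(\omega-2)c_1$ of the statement, leaving over a combination of $\cT^{\omega-2}-2\cM^{\omega-2}+\cN^{\omega-2}$, $\cB_{19}$ and $\cB_{22}$ minus a multiple of $\cB_5$; expanding these through the $\cM$- and $\cN$-analogues of \eqref{T ind}, \eqref{T1,1} reduces the task to checking, for each admissible $\ell$, that an explicit combination of the $d^\gamma_\ell$ and $e^\gamma_\ell$ is $\geq 0$, i.e. that this leftover is a nonnegative combination of the terms $\cT_\ell-2\cM_\ell+\cN_\ell$. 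I expect this last step — carrying the $\cR$-, $\cM$- and $\cN$-recursions through accurately enough to land on the stated coefficients, and checking that the residual $(\cT_\ell-2\cM_\ell+\cN_\ell)$-coefficients stay nonnegative — to be the main obstacle; one must moreover treat the small values $\omega\leq 3$ separately, where some of the $\cA_k$ (for instance $\cA_{19},\dots,\cA_{23}$ when $\omega=2$) do not occur.
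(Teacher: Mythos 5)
Your proposal is correct and follows essentially the same route as the paper: the equality comes from Claim~\ref{claim}, the nonnegativity of the $\cB_k$'s from expanding them via \eqref{T ind}, \eqref{T1,1} (and their $\cR$-, $\cM$-, $\cN$-analogues) into $\{\cR_\ell,\cT_\ell,\cT_\ell-2\cM_\ell+\cN_\ell,\cT_\ell-\cM_\ell\}$ together with Lemma~\ref{lpos} (your derivation of $\cT_\ell-\cM_\ell\geq0$ from \eqref{sym ric} is a valid variant of the paper's square-norm argument), and the regrouping you anticipate, including the coefficients $4\omega^3(\omega+4)$, $8\omega^3(\omega-1)^2(\omega-2)$ and $4\omega^2(\omega-1)(\omega+7)$, is exactly what the paper carries out in \eqref{Q(R)=}, \eqref{t-m}. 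The only step you leave open is closed in the paper by the identity \eqref{b=b}, $\cB_{26}=\frac{1}{2(\omega-1)^2}\cB_5-\frac{\omega-2}{\omega-1}\cB_{22}$ (equivalently $\cA_5=2(\omega-1)^2\cB_{26}+2(\omega-1)(\omega-2)\cA_{22}$ from Appendix~\ref{A=B}), which shows your ``leftover'' equals $8\omega^2(\omega-1)^2(\omega-2)\bigl[(\omega-3)\cB_{19}+\omega\cB_{22}\bigr]\geq 0$, so your proposed $\ell$-by-$\ell$ check would indeed succeed.
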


\begin{proof}

We still have to show that the $\cB_k$'s are nonnegative. For this, we further contract two-by-two the remaining indices occurring in the covariant derivatives in each of the $\cB_k$'s. 
Since the covariant derivatives of order less or equal to $\omega$ occurring in the terms $\cB_k$  commute, one establishes that for  any $1\leq k\leq 27$, $\cB_k=\sum_{\ell=0}^{[\frac{\omega}{2}]}\alpha_{k\ell} \cB_{k\ell}$, where $\alpha_{k\ell}$ are nonnegative integers representing the multiplicities and 
$$\cB_{k\ell}\in \{\cR_\ell, \cT_\ell,  \cT_{\ell}-2\cM_{\ell}+\cN_{\ell}, \cT_{\ell}-\cM_{\ell}\},$$
due to equalities \eqref{T ind}, \eqref{T1,1}, which hold for $\cR^\omega$, $\cT^\omega$, $\cM^{\omega-1}$ and  $\cN^{\omega-2}$.
Clearly, $\cR_\ell$ and $\cT_\ell$ are nonnegative. By Lemma~\ref{lpos},  $\cT_{\ell}-2\cM_{\ell}+\cN_{\ell}$ are nonnegative. The terms $\cT_{\ell}-\cM_{\ell}$ are 
also nonnegative, as it follows by writing down that the square norm of the $k$-covariant tensor 
$\nabla_{Kc}\Delta^\ell\Ric_{ab}-
\nabla_{Ka}\Delta^\ell\Ric_{bc}$ is nonnegative, where $K$ is a multi-index $(\omega-2\ell-1)$-tuple. 
Therefore the $\cB_k$'s are nonnegative.

Substituting in  the equality $\tr \cQ(R)=\sum_{k=1}^{27}u_k\cB_k$, proven in Claim \ref{claim}, the values of $\cB_k$ and $u_k$ from Table \ref{table}, we obtain
\begin{multline}\label{Q(R)=}
\tr \cQ(R)=(2\omega+4)(2\omega+2)\biggl\{ \frac{3}{2}\cR^{\omega}+3\omega^3\cR^{\omega-1}+2\omega^3(\omega-1)^3\cR^{\omega-2}+\\
\cT^{\omega}+4\omega^2(\omega-1)\cT^{\omega-2}_1+4\omega^3(\omega-1)^3\cT_{1,1}^{\omega-2}+
20\omega^3(\cT^{\omega-1}-\cM^{\omega-1})+\\
8\omega^3(\omega-1)^3(\cT^{\omega-2}-\cM^{\omega-2})+16\omega^3(\omega-1)^2(\omega-2)(\cT^{\omega-3}_1-\cM^{\omega-3}_1)+\\
28\omega^2(\omega-1)\cB_{5}+8\omega^2(\omega-1)^2\bigl[(\omega-2)(\omega-3)\cB_{19}+2\omega(\omega-2)\cB_{22}+\omega(\omega-1)\cB_{26}\bigr]\biggr\}.
\end{multline}

It order to prove the inequality stated in the proposition, we use the following equalities:

\begin{gather}
\cT^{\omega-2}-\cM^{\omega-2}=\frac{1}{2(\omega-1)^2}(\cT^{\omega-1}-\cM^{\omega-1})- 
\frac{\omega-2}{\omega-1}(\cT^{\omega-3}_1-\cM^{\omega-3}_1),\label{t-m}\\
\cB_{26}=\frac{1}{2(\omega-1)^2}\cB_{5}-\frac{\omega-2}{\omega-1}\cB_{22}\label{b=b},
\end{gather}
where the first one holds by \eqref{ind T} and the second one holds from the definition of $\cB_{5}$, $\cB_{22}$ and $\cB_{26}$. 
Substituting \eqref{t-m} and \eqref{b=b} in \eqref{Q(R)=} and using the fact that by the definition of $\omega$, $\cR_0:=|\nabla^\omega\Riem|^2$ is positive, we  obtain the desired strict inequality. 
\end{proof}

\begin{remark}
There is another method to prove the equality $\tr \cQ(R)=\sum_{k=1}^{27}u_k\cB_k$ of Proposition \ref{Q(R)>0}. One may contract the 4 entries of the Riemann tensor in $\cQ(R)$ one by one and then use the fact that $\cA_k=\cB_k$. This method is used in the proof of Lemma \ref{S lemma} for the computation of the $\cS_k$'s, with $2\leq k\leq4$.  
\end{remark}


\begin{lemma}\label{S lemma}
The following identities hold:
\begin{gather*}
 \cS_3=2(\omega+1)(\cT^{\omega}+2\omega^3\cM^{\omega-1}),\\
-\cS_2=2(\omega+1)\{\cT^\omega+4\omega^3(\cT^{\omega-1}-\cM^{\omega-1})+ 2\omega^2(\omega-1)\cT^{\omega-2}_1+ 2\omega^2(\omega-1)\cB_{5}\},\\
\cS_4 = 4\omega(\omega+1)\bigl\{\omega\cT^{\omega-1}-\omega(\omega+1)\cM^{\omega-1}+(\omega-1)\cB_{5}
+2\omega(\omega-1)^3\cN^{\omega-2}\bigr\}.
\end{gather*} 
\end{lemma}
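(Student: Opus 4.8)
The three identities will be proven by a single procedure: the one-by-one contraction of the tensorial (non-derivative) indices indicated in the Remark above, run in parallel to the computation of $\tr\cQ(R)$ but lighter, since each of $\cS_2,\cS_3,\cS_4$ involves only two tensor factors. Throughout I use that covariant derivatives of order $\le 2\omega+1$ commute (Theorem~\ref{HV-formulas}), so that every derivative multi-index may be treated as unordered; that $|\nabla^k\Riem|=0$ for $k\le\omega-1$, which kills any term in which a Riemann factor carries fewer than $\omega$ covariant derivatives; and that $|\nabla^\omega\scal|=0$ together with $\nabla\Delta^\ell\scal=0$ for $\ell\le\omega$, which kills any term reducing to a covariant derivative of $\scal$ of admissible order.

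I would start with $\cS_3=\tr\Sym\nabla_I\Ric_{aj}\cdot\nabla_J\Ric_{bj}$, the simplest case. Its only tensorial indices are the Ricci slots $a,b$; the two-by-two contraction of the symmetrization over the $2\omega+2$ indices $\{a,b\}\cup I\cup J$ is organized according to the partner of the slot $a$. If $a$ is contracted against a derivative index of its own factor, the contracted second Bianchi identity $\nabla^a\Ric_{aj}=\tfrac12\nabla_j\scal$ forces an $\omega$-th covariant derivative of $\scal$ to appear, and the term vanishes; likewise, after using the identity twice, a term vanishes if both Ricci slots end up contracted against derivative indices of the opposite factor. The surviving patterns are: $a$ contracted with $b$, which gives $\tr\Sym\nabla_I\Ric_{ij}\cdot\nabla_J\Ric_{ij}=\cT^{\omega}$; and $a$ contracted with a derivative index of the opposite factor, which forces $b$ to be contracted with a derivative index of the first factor and gives a term of the shape of $\cM^{\omega-1}$. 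Counting the pairings realizing each pattern — recalling that the $2\omega$, resp. $2\omega-2$, remaining derivative indices may be permuted freely — produces the multiplicities $2(\omega+1)$ and $4\omega^3(\omega+1)$, hence $\cS_3=2(\omega+1)(\cT^{\omega}+2\omega^3\cM^{\omega-1})$. An accounting check, that the weighted multiplicities of all patterns (vanishing ones included) sum to $(2\omega+2)!$, guarantees completeness.

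For $\cS_2=\tr\Sym\nabla_IR_{iabj}\cdot\nabla_J\Ric_{ij}$ and $\cS_4=\tr\Sym\nabla_IR_{iabj}\cdot\nabla_{J'}\nabla_i\Ric_{cj}$ the procedure is identical, the tensorial indices to be contracted one at a time being $a,b$ (and, for $\cS_4$, also $c$). Whenever such an index is contracted against a derivative index sitting on a Riemann factor, I first move it into the first slot of $R$ using the antisymmetries and pair symmetry of the curvature tensor, and the first Bianchi identity where needed, then apply the contracted second Bianchi identity $\nabla^pR_{pqrs}=\nabla_r\Ric_{qs}-\nabla_s\Ric_{qr}$; this turns the term into one involving only covariant derivatives of the Ricci tensor, unless the contracted index finally lands on $\scal$, in which case the term vanishes. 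When $a$ is contracted with $b$ one has $g^{ab}R_{iabj}=-\Ric_{ij}$ (the sign being fixed by our curvature convention), collapsing a Riemann factor to a Ricci factor; terms with too few derivatives on a Riemann factor vanish by $|\nabla^k\Riem|=0$, $k\le\omega-1$. After these reductions every surviving term is a two-by-two contraction of a product of covariant derivatives of Ricci tensors — a building block $\cT^\gamma_\ell$, $\cM^\gamma_\ell$ or $\cN^\gamma_\ell$ — which in practice is most easily read off by recognizing it among the $\cA_k=\cB_k$ of Table~\ref{table}, whose rewriting in terms of $\cT^{\omega}$, $\cT^{\omega-1}-\cM^{\omega-1}$, $\cT^{\omega-2}_1$, $\cB_5$ and $\cN^{\omega-2}$ is already recorded there. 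Collecting the surviving terms with the multiplicities computed as in the $\cS_3$ case, and running the analogous accounting check summing to $(2\omega+2)!$, yields the stated formulas; consistently, $\cS_2$ emerges as a combination of the ``Ricci times Riemann'' blocks $\cA_1,\cA_2,\cA_3,\cA_5$, while $\cS_4$, having an extra free tensor index available for contraction, also picks up the $\cN^{\omega-2}$ contribution.

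The main obstacle is purely the combinatorial and sign bookkeeping: at each application of the contracted second Bianchi identity the sign produced by the Riemann symmetries must be tracked exactly, and the multiplicity of each contraction pattern must be counted with care, since the commutation of derivative indices both merges many a priori distinct patterns and allows one pattern to be reached by several routes when the contracted index is transported across a Riemann factor. For small $\omega$ some patterns degenerate — the $\cN^{\omega-2}$ and $\cB_5$ contributions are absent for $\omega\le1$, resp. $\omega=1$ — and must be checked separately. The accounting check for each $\cS_k$ is what ultimately validates the bookkeeping.
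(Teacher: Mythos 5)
Your route is the paper's own: the proof of Lemma \ref{S lemma} in the text proceeds exactly as your Remark-inspired outline, contracting the tensorial entries of the Riemann/Ricci factors one at a time, discarding contributions killed by $|\nabla^\omega\scal|=0$ and $|\nabla^k\Riem|=0$ for $k\leq\omega-1$, rewriting divergences via \eqref{der-r1}--\eqref{der-r2}, and identifying the survivors with the blocks $\cA_k=\cB_k$ of Table \ref{table}; your $\cS_3$ pattern count ($2(\omega+1)$ for the slot--slot pairing, $4\omega^3(\omega+1)$ for the cross pairing) and your outlines for $\cS_2$ and $\cS_4$ (including the sign $g^{ab}R_{iabj}=-\Ric_{ij}$ and the extra $\cN^{\omega-2}$ term coming from the third slot of $\cS_4$) agree with what the paper carries out in detail. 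One sentence must be deleted or corrected: the claim that a term of $\cS_3$ vanishes when \emph{both} Ricci slots are contracted against derivative indices of the opposite factor is false and contradicts what you do immediately afterwards --- that pairing, $\nabla_{I'}\nabla_b\Ric_{aj}\cdot\nabla_{J'}\nabla_a\Ric_{bj}$, is precisely the surviving $\cM^{\omega-1}$ contribution, and only a contraction of a Ricci slot with a derivative index of its \emph{own} factor produces an $\omega$-th derivative of $\scal$ and vanishes. Since you do retain that term with the correct multiplicity, the slip does not affect your final formulas, but as written it is an internal contradiction rather than part of the argument.
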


\begin{proof}
Recall that the $\cS_k$'s are defined in the beginning of this section. First, we contract the index $b$, occurring in the symmetric tensor which defines $\cS_2$, with all the other indices not yet contracted (i.e. the indices in $I\cup J\cup\{a\}$), we obtain: 
\begin{equation*}
\hspace{-0.3cm}\frac{-\cS_2}{2\omega+2}= 
\tr \Sym \bigl\{\nabla_{ I } \Ric_{i j} \nabla_{J}\Ric_{ij}- \omega\nabla_{I'} \nabla_b R_{i abj} \nabla_{J}\Ric_{ij}-
\omega\nabla_{I} R_{i abj} \nabla_{J'}\nabla_b\Ric_{ij}\bigr\},
\end{equation*} 
 We continue, by contracting the index $a$ with all possible indices and we obtain:
 \begin{equation*}
 \begin{split}
\hspace{-0.3cm}\frac{-\cS_2}{2\omega+2}
=  &\cT^\omega-2\omega^2(\omega-1)\tr \Sym \biggl\{\nabla_{I''} \nabla_{ab} R_{i abj} \nabla_{J}\Ric_{ij}+
 \nabla_{I}R_{i abj} \nabla_{J''} \nabla_{ab} \Ric_{ij}\biggr\}-\\
& 2\omega^3\tr \Sym \biggl\{\nabla_{I'} \nabla_{b} R_{i abj} \nabla_{J'} \nabla_{a} \Ric_{ij}+  \nabla_{I'} \nabla_{a} R_{i abj}\nabla_{J'} \nabla_{b} \Ric_{ij}\biggr\}\\
=& \frac{1}{c_1}\{c_1\cA_{1}+c_2\cA_{2}+c_3\cA_{3}+c_2\cA_{4}+c_3\cA_{5}\},
\end{split}
\end{equation*}
where we used the notation of Table \ref{table}. Using the fact that $\cA_k=\cB_k$, we obtain the claimed equality for $\cS_2$.
Using the same method, which consists of contracting the entries of the Riemann and Ricci curvature tensors, we compute $\cS_3$ and $\cS_4$. 

The identity holds for $\cS_3$,  using the contracted second Bianchi identity and the fact that $|\nabla^\omega\scal|=0$. For $\cS_4$, we first contract the index $a$ with all the other not yet contracted indices and obtain:
\begin{multline*}
\frac{\cS_4}{2\omega+2}
=  \tr \Sym  \bigl\{-\nabla_{I} \Ric_{ij} \nabla_{J'i} \Ric_{cj}+\omega \nabla_{I'a} R_{iabj} \nabla_{J'i} \Ric_{cj}+\\
 (\omega-1) \nabla_{I} R_{iabj} \nabla_{J''ia} \Ric_{cj}+ \nabla_{I} R_{iabj} \nabla_{J'i} \Ric_{aj} \bigr\}.  
\end{multline*}
The third term of the right hand side in the last equality vanishes since the Riemann tensor is skew-symmetric with respect to the two first entries and the covariant derivatives of the Ricci tensor commute. By the second Bianchi identity, we have $\nabla_{a} R_{iabj}=\nabla_{j}\Ric_{ib}-\nabla_{b}\Ric_{ij}$. 
Substituting in the second term and using the fact that $|\nabla^\omega\scal|=0$, we obtain
\begin{multline*}
\frac{\cS_4}{2\omega+2}
=  \bigl\{-2\omega^2\cM^{\omega-1}+2\omega^2(\omega-1)[2(\omega-1)^2\cN^{\omega-2}- \cM^{\omega-1}]+
\\ 2\omega(\omega-1)\cA_5+2\omega^2 \cA_4\bigr\}.
\end{multline*}
Therefore, the equality corresponding to $\cS_4$ holds, since $\cA_4=\cB_4$ and $\cA_5=\cB_5$ (cf. Table \ref{table}).

\end{proof}

\begin{proof}[End of Theorem \ref{main thm} proof]  
By Corollary \ref{equa cor}, it is sufficient to prove that \index{I@$\cI$|textbf}
\begin{equation*}
\cI:= \tr \cQ(R)+2(\omega+3)[(\omega+3)(\cS_2+\cS_3)+\omega \cS_4]>0.
\end{equation*}
By Lemma \ref{S lemma}, it follows that 

\begin{multline}\label{sum S}
2(\omega+3)[(\omega+3)(\cS_2+\cS_3)+\omega \cS_4]= 
4(\omega+3)(\omega+1)\omega^2\biggl\{-2\omega(2\omega+5)(\cT^{\omega-1}-\cM^{\omega-1})\\
-2(\omega-1)(\omega+3)\cT_1^{\omega-2}
-2(\omega-1)(\omega+2)\cB_{5}+6\omega \cM^{\omega-1}+
4\omega(\omega-1)^3\cN^{\omega-2}\biggr\}.
\end{multline}
Therefore, by Proposition \ref{Q(R)>0} and \eqref{sum S}, we obtain
\begin{multline*}
\frac{\cI}{4(\omega+1)}>2\omega^3(\omega+1)(\cT^{\omega-1}-\cM^{\omega-1})+
(\omega+2)
\bigl\{\cT^{\omega}+8\omega^3(\omega-1)^2(\omega-2)(\cT^{\omega-3}_1-\cM^{\omega-3}_1)\bigr\}+\\
2\omega^2(\omega-1)\bigl\{2\omega(\omega+2)(\omega-1)^2\cT_{1,1}^{\omega-2}-(\omega^2+4\omega+5)\cT^{\omega-2}_1\bigr\}+\\
(\omega+3)\bigl\{6\omega^3 \cM^{\omega-1}+4\omega^3(\omega-1)^3
\cN^{\omega-2}\bigr\},
\end{multline*}
where we used the fact that $\cB_5$ is nonnegative. 
Set 

\index{I@$\cI_1$, $\cI_2$|textbf}
\begin{multline*}
\cI_1 :=  2\omega^2(\omega-1)\bigl\{2\omega(\omega+2)(\omega-1)^2
\cT_{1,1}^{\omega-2}-(\omega^2+4\omega +5)\cT^{\omega-2}_1+\\
 4\omega(\omega+2)(\omega-1)(\omega-2)(\cT^{\omega-3}_1-\cM^{\omega-3}_1)\bigr\},
\end{multline*}
\begin{multline*}
\cI_2 :=  2\omega^3(\omega+1)(\cT^{\omega-1}-\cM^{\omega-1})+ 
(\omega+2)\cT^{\omega}+\\
  (\omega+3)\{6\omega^3 \cM^{\omega-1}+ 4\omega^3(\omega-1)^3\cN^{\omega-2}\}.
\end{multline*}
In this notation, we have $\cI>4(\omega+1)(\cI_1+\cI_2)$. 
In order to finish the proof, we compute $\cI_1$ and $\cI_2$ and  
show that $\cI_1+\cI_2\geq 0$. For the computation of $\cI_2$, we have

\begin{equation*}
\begin{split}
   6\omega^3 \cM^{\omega-1}+4\omega^3(\omega-1)^3\cN^{\omega-2} &=6\omega^3\sum_{\ell=0}^{[\frac{\omega-1}{2}]}
d^{\omega-1}_\ell \cM_\ell+4\omega^3(\omega-1)^3\sum_{\ell=0}^{[\frac{\omega-2}{2}]}
d^{\omega-2}_\ell \cN_\ell\\
& =\sum_{\ell=0}^{[\frac{\omega-1}{2}]}
(\omega-2\ell)d^\omega_\ell [3\cM_\ell+(\omega-2\ell-1) \cN_\ell],
\end{split}
\end{equation*}
since for $\omega$ odd, $\cN_{\frac{\omega-1}{2}}=0$ by definition.
We substitute the value of $\cN_\ell$, given by \eqref{sym ric} and obtain
\begin{multline}\label{111}
(\omega+3)\{6\omega^3 \cM^{\omega-1}+4\omega^3(\omega-1)^3
\cN^{\omega-2}\}=\\
\sum_{\ell=0}^{[\frac{\omega-1}{2}]}d^\omega_\ell (\omega+3)
\{ -2 \cT_\ell-(\omega-2\ell)\cM_\ell\}.
\end{multline}
If $\omega$ is even, then $\cT_{\frac{\omega}{2}}=0$, by \eqref{sym ric}. Thus
\begin{gather}
(\omega+2)\cT^{\omega} =
\sum_{\ell=0}^{[\frac{\omega-1}{2}]}
d^\omega_\ell (\omega+2)
\cT_\ell.\label{222}\\
 2\omega^3(\omega+1)(\cT^{\omega-1}-\cM^{\omega-1}) =\sum_{\ell=0}^{[\frac{\omega-1}{2}]}d^\omega_\ell  (\omega+1)
(\omega-2\ell)(\cT_\ell-\cM_\ell).\label{333}
\end{gather}

The sum of the right hand sides of \eqref{111}, \eqref{222} and \eqref{333}, denoted by $\cI_2$, 
is given by 
\begin{equation*}
\cI_2=\sum_{\ell=0}^{[\frac{\omega-1}{2}]}d^\omega_\ell 
\{[(\omega+1)(\omega-2\ell)-(\omega+4)]
\cT_\ell-2 (\omega-2\ell)(\omega+2)\cM_\ell \}.
\end{equation*}
Substituting the value of $\cM_\ell$ given by \eqref{sym ric} and using the inequality 
of Lemma \ref{lpos}, it follows that  
\begin{equation}\label{I2}
 \cI_2\geq \sum_{\ell=0}^{[\frac{\omega-1}{2}]}2\ell d^\omega_\ell  \cT_\ell.
\end{equation}
For the computation of $\cI_1$, we proceed similarly as above. By equalities \eqref{T ind} 
and \eqref{T1,1} we have
\begin{gather*}
-2\omega^2(\omega-1)(\omega^2+4\omega+5)\cT_1^{\omega-2}=
-\sum_{\ell=1}^{[\frac{\omega-1}{2}]}2\ell(\omega^2+4\omega+5)
d^{\omega}_{\ell} \cT_\ell, \\
4\omega^3(\omega+2)(\omega-1)^3 \cT_{1,1}^{\omega-2}=
\sum_{\ell=1}^{[\frac{\omega-1}{2}]}4
\ell^2(\omega+2) d^{\omega}_{\ell} \cT_\ell,\\
8\omega^3(\omega^2-4)(\omega-1)^2(\cT^{\omega-3}_1-\cM^{\omega-3}_1)=
 \sum_{\ell=1}^{[\frac{\omega-1}{2}]}
4\ell(\omega+2)(\omega-2\ell)d^{\omega}_{\ell} (\cT_\ell-\cM_\ell).
\end{gather*}
Taking the sum of the last three equalities, we obtain
\begin{multline*}
\cI_1=\sum_{\ell=1}^{[\frac{\omega-1}{2}]}
2\ell d^{\omega}_{\ell} \bigl\{[\omega(\omega-2\ell)-4\ell-5 ]\cT_\ell-
2(\omega-2\ell)(\omega+2)  \cM_\ell\bigr\}.
\end{multline*}
Hence
\begin{equation}\label{I1}
\cI_1\geq -\sum_{\ell=1}^{[\frac{\omega-1}{2}]}2\ell d^\omega_\ell \cT_\ell.
\end{equation}

By \eqref{I2} and \eqref{I1}, we conclude that $\cI>4(\omega+1)(\cI_1+\cI_2)\geq 0$.
\end{proof}

\renewcommand{\theequation}{A-\arabic{equation}}    
  \setcounter{equation}{0}  

\vspace*{0.4cm}

\appendix

\section{}\label{det comp}


 {

\subsection{Proof of the equalities $\cA_k=\cB_k$}\label{A=B}

In the following computation, the components of the Ricci tensor $\Ric$ are 
denoted by $R_{ij}$.
Here we give some useful formulas for the computation of the $\cA_k$'s:
\begin{equation}\label{bianchi}
 \nabla_i R_{ij}=\frac{1}{2}\nabla_j \scal
\end{equation}
\begin{equation}\label{der-r1}
 \nabla_a R_{iabj}=-\nabla_b R_{ij}+\nabla_j R_{bi}  \text{ (by 2$^{\rm{nd}}$ Bianchi)}
\end{equation}
\begin{equation}\label{der-r2}
 \nabla_b R_{iabj}=-\nabla_a R_{ij}+\nabla_i R_{aj} 
\end{equation}
\begin{equation}\label{lapl}
 \Delta R_{iabj}=\nabla_{bi} R_{aj}+\nabla_{aj} R_{bi}-\nabla_{ij} R_{ab}-\nabla_{ab} R_{ij}. \text{ (by 2$^{\rm{nd}}$ Bianchi)}
\end{equation}

\begin{equation}\label{inv}
\begin{split}
\cA&:=\nabla_c R_{iabj}\cdot\nabla_c R_{ibaj}=-\nabla_c R_{iabj}\cdot(\nabla_c R_{baij}+\nabla_c R_{aibj})\\
&=|\nabla \Riem|^2-\cA,\quad\text{yielding } \cA=\frac{1}{2}|\nabla \Riem|^2.
\end{split}
\end{equation}
\begin{equation}\label{inv2}
\begin{split}
\cB&:=\nabla_c R_{iabj}\cdot\nabla_b R_{iacj}=-\nabla_c R_{iabj}\cdot(\nabla_j R_{iabc}+\nabla_c R_{iajb})\\
& =|\nabla \Riem|^2-\cB,\quad\text{yielding } \cB=\frac{1}{2}|\nabla \Riem|^2.
\end{split}
\end{equation}
$\cA_1:=\cT^\omega$.\\
$\hspace{-0.4cm}\cA_{2}:=-\tr \Sym \nabla_{I'c} R_{ij}\nabla_{J'b} R_{ibcj}\hspace{-0.2cm}\overset{\eqref{der-r1}}{=}\hspace{-0.2cm}\tr \Sym\nabla_{I'c}R_{ij}\nabla_{J'}(\nabla_c R_{ij}-\nabla_j R_{ci})=\cT^{\omega-1}-~\cM^{\omega-1}$.\\
$\cA_{3}:=-\tr \Sym\nabla_I R_{ij}\nabla_{J''bc} R_{ibcj}\overset{\eqref{der-r2}}{=}
-\tr \Sym\nabla_I R_{ij}\nabla_{J''b}(-\nabla_b R_{ij}+\nabla_i R_{bj})=~\cT^{\omega-2}_{1}$.\\
$\cA_{4}=-\tr \Sym\nabla_{I'b}R_{ij}\nabla_{J'c} R_{ibcj}\overset{\eqref{der-r2}}{=}\tr \Sym\nabla_{I'b} R_{ij}\nabla_{J'}(\nabla_b R_{ij}-\nabla_i R_{bj})=\cA_{2}$.\\
$\cA_{6}:=\cR^\omega$.\\
$\cA_{7}:=\tr {\Sym}\nabla_{I'b}R_{iabj}\nabla_{J'c}R_{iacj}\overset{\eqref{der-r2}}{=}2(\cT^{\omega-1}-\cM^{\omega-1})$.\\
$\cA_{8}:=\tr {\Sym}\nabla_{I''cb}R_{iabj}\nabla_{J}R_{iacj}\overset{\eqref{der-r2}}{=}2\cB_5$.\\
$\cA_{9}:=\tr {\Sym}\nabla_{I'c}R_{iabj}\nabla_{J'b}R_{iacj}\overset{\eqref{inv2}}{=}\frac{1}{2}\cR^{\omega-1}$.\\
$\cA_{10}:=\tr {\Sym}\nabla_{I}R_{iabj}\nabla_{J}R_{ibaj}\overset{\eqref{inv}}{=}\frac{1}{2}\cR^\omega$.\\
$\cA_{11}:=\tr {\Sym}\nabla_{I'b}R_{iabj}\nabla_{J'c}R_{icaj}\overset{\eqref{der-r2}}{=} \cB_2$.\\
$\cA_{12}:=\tr {\Sym}\nabla_{I''bc}R_{iabj}\nabla_{J}R_{icaj}\overset{\eqref{der-r2}}{=} \cB_5$.\\
$\cA_{13}:=\tr {\Sym}\nabla_{I'c}R_{iabj}\nabla_{J'b}R_{icaj}=\tr {\Sym}\nabla_{I'}(\nabla_aR_{icbj}-\nabla_iR_{acbj})\nabla_{J'b}R_{icaj}=\cA_9-\cA_{13}$. Hence $\cA_{13}=\frac{1}{4}\cR^{\omega-1}$.\\
$\cA_{14}:=\tr {\Sym}\nabla_{I}R_{iabj}\nabla_{J''bc}R_{icaj}\overset{\eqref{der-r1}}{=}\cB_5$.\\
$\cA_{15}:=\tr {\Sym}\nabla_{I}R_{iabj}\nabla_{J''ac}R_{icbj}\overset{\eqref{der-r1}}{=}2\cB_5$.\\
$\cA_{16}:=\tr {\Sym}\nabla_{I'c}R_{iabj}\nabla_{J'a}R_{icbj}\overset{\eqref{inv2}}{=}\frac{1}{2}\cR^{\omega-1}$.\\
$\cA_{17}:=\tr {\Sym}\nabla_{I'a}R_{iabj}\nabla_{J'c}R_{icbj}\overset{\eqref{der-r1}}{=}2(\cT^{\omega-1}-\cM^{\omega-1})$.\\
$\cA_{18}:=\tr \Sym \nabla_{I'''ab} R_{iabj} \nabla_{J''cd} R_{icdj}\overset{\eqref{der-r1}}{=}\tr \Sym\nabla_{I''}\Delta R_{ij} \nabla_{J''}\Delta R_{ij}=\cT^{\omega-2}_{1,1}$.\\
$\cA_{20}:=\tr \Sym\nabla_{I'''abc}R_{iabj}\nabla_{J'd} R_{icdj}
\overset{\eqref{der-r2}}{=}\tr \Sym\nabla_{I'''c}\Delta R_{ij} \nabla_{J'} (\nabla_c R_{ij}-\nabla_i R_{cj})$\\
$= \cT^{\omega-3}_{1}-\cM^{\omega-3}_{1}.$\\
$\cA_{21}:=\tr \Sym\nabla_{I'''dab} R_{iabj}\nabla_{J'c} R_{icdj}
\overset{\eqref{der-r1}}{=}\tr \Sym\nabla_{I'''d}\Delta R_{ij} \nabla_{J'} (\nabla_d R_{ij}-\nabla_j R_{di})=~\cT^{\omega-3}_{1}-\cM^{\omega-3}_{1}$.\\
$\cA_{23}:=\nabla_{I'd} \Delta R_{iabj} \nabla_{J'''abc}R_{icdj}\overset{\eqref{der-r1}}{=} 
-\nabla_{I'd}  R_{iabj} \nabla_{J'''dab} R_{ij}\overset{\eqref{der-r2}}{=}\cA_{22}$.\\
$\cA_{24} :=\tr \Sym\nabla_{I''ab} R_{icdj}\nabla_{J''cd}R_{iabj}=\tr \Sym\nabla_{I''ac}R_{ibdj} \nabla_{J''cd}R_{iabj}\overset{\eqref{inv2}}{=}  \frac{1}{2}\cR^{\omega-2}$.\\
$\cA_{25}  :=\tr \Sym\nabla_{I''da} R_{iabj}\nabla_{J''bc}R_{icdj} \overset{\eqref{der-r1}}{=}\cT^{\omega-2}-\cM^{\omega-2}$.\\
$\cA_{26} := \tr \Sym\nabla_{I''ca} R_{iabj} \nabla_{J''bd} R_{icdj}\overset{\eqref{der-r1}, \eqref{der-r2}}{=}\cT^{\omega-2}-2\cM^{\omega-2}+\cN^{\omega-2}$.\\
$ \cA_{27}  :=\tr \Sym\nabla_{I''cb} R_{iabj} \nabla_{J''ad} R_{icdj} \overset{\eqref{der-r2}}{=} \cT^{\omega-2}-\cM^{\omega-2}$.\\

For the remaining terms $\cA_{5}$, $\cA_{19}$ and $\cA_{22}$, the computation is done by induction, by introducing the following sequence  $\cU_{\omega-\beta} := -\tr {\Sym}\nabla_{I_\beta bc}\nabla_{K_\beta}\Ric_{ij}\nabla_{J_\beta}\nabla_{K_\beta} R_{ibcj}$ for $1\leq\beta\leq\omega-2$ and  $\cU_\omega :=\cA_5$, where $K_\beta$, $I_\beta$ and $J_\beta$ are   multi-indices sets of cardinalities $\beta$, $\omega-\beta-2$ and $\omega-\beta$ respectively.  The induction formula is given by
\begin{multline*}
\cU_{\omega-\beta}=2(\omega-\beta-1)(\omega-\beta-2)\cU_{\omega-\beta-1}-\\
2(\omega-\beta-1)^2\tr {\Sym}\nabla_{I_\beta bc}\nabla_{K_\beta}\Ric_{ij}\nabla_{J_{\beta-2}}\nabla_{K_{\beta}}\Delta R_{ibcj}.
\end{multline*}
Using \eqref{lapl}, we have $\tr {\Sym}\nabla_{I_\beta bc}\nabla_{K_\beta}\Ric_{ij}\nabla_{J_{\beta-2}}\nabla_{K_{\beta}}\Delta R_{ibcj}=\cT^{\omega-\beta-2}-2\cM^{\omega-\beta-2}+\cN^{\omega-\beta-2}$.
By induction on $\beta$, we prove that
\begin{equation}\label{A_5}
\begin{split}
\cA_{5} & =\cU_\omega=(\omega-1)!(\omega-2)!\sum_{k=0}^{\omega-2}2^{\omega-k-1}\frac{k+1}{(k!)^2}(\cT^{k}-2\cM^{k}+
\cN^{k})\\
& \overset{\eqref{T ind},\eqref{comb4}}{=}\sum_{\ell=0}^{[\frac{\omega-2}{2}]}e^{\omega}_\ell(\cT_\ell-2\cM_\ell+
\cN_\ell).
\end{split}
\end{equation}

We have $\cA_{22}:= \tr \Sym\nabla_{I'c}R_{iabj} \nabla_{J'''abd}R_{icdj}\overset{\eqref{der-r2}}{=} 
-\tr \Sym\nabla_{I'c} R_{iabj} \nabla_{J'''abc} R_{ij}$. 
By contracting an index in $\cA_5$, which corresponds to a covariant derivative of the Riemann tensor and using the last equality, we obtain 
$\cA_5=2(\omega-1)^2\cB_{26}+2(\omega-1)(\omega-2)\cA_{22}$.
Therefore, by \eqref{A_5}, we obtain 
$\cA_{22} =\sum_{\ell=0}^{[\frac{\omega-3}{2}]}e^{\omega-1}_\ell(\cT_\ell-2\cM_\ell+
\cN_\ell)$.\\

We have $\cA_{19}:=\tr \Sym\nabla_{I''''abcd} R_{iabj}\nabla_J R_{icdj}=-\tr \Sym\nabla_{I''''cd}\Delta R_{ij} \nabla_J R_{icdj}.$
By contracting an index in $\cA_5$, which corresponds to a covariant derivative of the Ricci tensor and using the last equality, we obtain 
$\cA_{5}=2(\omega-1)(\omega-3)\cA_{19}+2\omega(\omega-1)\cA_{22}.$
We deduce that
\begin{equation*}
\cA_{19}= \sum_{\ell=0}^{[\frac{\omega-2}{2}]}\frac{\ell e^{\omega}_\ell}{(\omega-1)(\omega-2)(\omega-3)}(\cT_\ell-2\cM_\ell+
\cN_\ell).
\end{equation*}


\subsection{Combinatorics Formulas}\label{comb}

The following identities hold for any nonnegative integer $\omega$
\begin{gather}
 (\omega+2)\sum_{k=\omega}^{2\omega}(k+1)\binom{k}{\omega}=(\omega+3)^2C(\omega),\label{comb1}\\  
 (\omega+2)\sum_{k=\omega}^{2\omega-1}(k+1)(2\omega-k)\binom{k}{\omega}=\omega(\omega+3)C(\omega),\label{comb2}
\end{gather}
where $C(\omega)=(\omega+1)^2(\omega+2)^2(2\omega+2)![(\omega+3)!]^{-2}$.

\begin{proof}
Identities \eqref{comb1} and \eqref{comb2} can be written in a simpler way  
which can be viewed as special cases (for $n=2\omega$) of the 
following combinatorial identities:
\begin{gather}\label{comb4}
 \sum_{k=\omega}^{n-1}\binom{k}{\omega}=\binom{n}{\omega+1},\quad \sum_{k=\omega}^{n-2}(n-k-1)\binom{k}{\omega}=\binom{n}{\omega+2},
\end{gather}
for all integers $n\geq \omega$, respectively $n\geq \omega+2$.

These identities follow by counting in a different way the number of combinations. The first identity is obtained by counting the number of subsets with $(\omega+1)$ elements out of $n$ elements in the following way: the sets are separated with respect to their largest element. For each $\omega\leq k\leq n-1$, $\binom{k}{\omega}$ counts the subsets of $(\omega+1)$-elements whose largest element is $k+1$.

Similarly, the second identity follows by counting $\binom{n}{\omega+2}$ as follows: 
the sets are separated with respect to their second largest element.  
For each $\omega\leq k\leq n-2$, $(n-k-1)\binom{k}{\omega}$ is the number of 
subsets with $\omega+2$ elements whose second largest element is $k+1$. Indeed, if the second largest element is $k+1$, the others $\omega$ elements of the set which are smaller must form a subset of $\omega+k$ and the largest element may be any of the remaining $n-k-1$ elements.

\end{proof}

\renewcommand\indexname{Notation Index}
\begin{theindex}\label{notation}
{\bfseries A}\nopagebreak

  \item $\cA_k$ \dotfill \textbf{10}

  \indexspace
{\bfseries B}\nopagebreak

  \item $\cB_k$ \dotfill \textbf{10}

  \indexspace
{\bfseries C}\nopagebreak

  \item $C(\omega)=\frac{(\omega+1)^2(\omega+2)^2(2(\omega+1))!}{[(\omega+3)!]^{2}}$ \dotfill 
		\textbf{3}
  \item $c_1,\ldots,c_6$ \dotfill \textbf{10}

  \indexspace
{\bfseries D}\nopagebreak

  \item $d^{\gamma}_{\ell}=\frac{ 2^{\gamma-2\ell}(\gamma !)^3}{(\gamma-2\ell)!(\ell !)^2}$ \dotfill 
		\textbf{9}

  \indexspace
{\bfseries E}\nopagebreak

  \item $e^{\gamma}_{\ell}=\frac{2^{\gamma-2\ell-2}\gamma !(\gamma-1)!(\gamma-2)! }{(\gamma-2\ell-2)!(\ell+1)!\ell !}$ \dotfill 
		\textbf{9}

  \indexspace
{\bfseries I}\nopagebreak

  \item $\cI$ \dotfill \textbf{12}
  \item $\cI_1$, $\cI_2$ \dotfill \textbf{13}
  \item $I$, $J$ multi-indices sets $\#I=\#J=\omega$ \dotfill 
		\textbf{10}
  \item $I'$, $I''$ multi-indices sets $\#I'+1=\#I''+2=\omega$ \dotfill 
		\textbf{10}

  \indexspace
{\bfseries M}\nopagebreak

  \item $\cM_\ell=\nabla_{K}\nabla_{a}\Delta^\ell\Ric_{bc}\cdot \nabla_{K}\nabla_c\Delta^\ell\Ric_{ab}$ \dotfill 
		\textbf{3}
  \item $\cM^{\omega-1-\beta-2\ell}_\ell=\tr \underset{I' \cup J'}{\Sym}  \nabla_{I'Kp} \Delta^\ell \Ric_{ij}  \nabla_{J'Ki}\Ric_{pj}$ \dotfill 
		\textbf{8}

  \indexspace
{\bfseries N}\nopagebreak

  \item $\cN_\ell=\nabla_{K'}\nabla_{cd}\Delta^\ell\Ric_{ab} \nabla_{K'}\nabla_{ab}\Delta^\ell\Ric_{cd}$ \dotfill 
		\textbf{3}
  \item $\cN^{\omega-2-\beta-2\ell}_\ell=\tr \underset{I'' \cup J''}{\Sym}  \nabla_{I''Kpq} \Delta^\ell \Ric_{ij}  \nabla_{J''Kij}\Ric_{pq}$ \dotfill 
		\textbf{8}

  \indexspace
{\bfseries O}\nopagebreak

  \item $\omega=\inf \{k\in \mathbb N : |\nabla^k Weyl_g(x)|\neq0\}$ \dotfill 
		\textbf{2}

  \indexspace
{\bfseries Q}\nopagebreak

  \item $\cQ(R)_{IJabcd}=\nabla_IR_{iabj}\nabla_JR_{icdj}$ \dotfill 
		\textbf{5}

  \indexspace
{\bfseries R}\nopagebreak

  \item $\cR_\ell= \no\nabla^{\omega-2\ell}\Delta^{\ell} \Riem \no^2$ \dotfill 
		\textbf{3}
  \item $\no\nabla^k\Riem\no$ $k$-th derivative norm of $\Riem$ \dotfill 
		\textbf{2}
  \item $\Ric_{ij}=R^\ell_{ i \ell j}$ \dotfill \textbf{1}
  \item $R^q_{jk\ell}X^j=\nabla_k\nabla_\ell X^q-\nabla_\ell\nabla_k X^q$ \dotfill 
		\textbf{1}
  \item $R_{ijk\ell}=g_{iq}R^q_{jk\ell}$ \dotfill \textbf{1}

  \indexspace
{\bfseries S}\nopagebreak

  \item $\mathfrak{S}(k)$ symmetric group of $\{1,\ldots,k\}$ \dotfill 
		\textbf{2}
  \item $\cS =  \tr \underset{\substack{K\cup \{a,b\}, \#K=2\omega+2}}{\Sym} \nabla_{K}\Ric_{ab}$ \dotfill 
		\textbf{5}
  \item $\cS_1=   \tr \underset{K,\#K=2\omega+2}{\Sym}\nabla_{K}\scal$ \dotfill 
		\textbf{5}
  \item $\cS_2=   \tr \underset{\substack{I\cup J\cup \{a,b\},\\ \#I=\#J=\omega}}{\Sym}\nabla_{I}\Ric_{aj}\cdot\nabla_{J}\Ric_{bj}$ \dotfill 
		\textbf{5}
  \item $\cS_3=   \tr \underset{\substack{I\cup J\cup \{a,b\},\\ \#I=\#J=\omega}}{\Sym}\nabla_{I}\Ric_{aj}\cdot\nabla_{J}\Ric_{bj}$ \dotfill 
		\textbf{5}
  \item $\cS_4= \tr\hspace{-0.3cm} \underset{\substack{I\cup J'\cup \{a,b,c\},\\ \#I=\#J'+1=\omega}}{\Sym} \nabla_{I}R_{iabj}\cdot\nabla_{J'}\nabla_i\Ric_{cj}$ \dotfill 
		\textbf{5}
  \item $\Sym T_{p_1\ldots p_k}= \underset{\sigma\in\mathfrak{S}(k)}{\sum} T_{p_{\sigma(1)}\ldots p_{\sigma(k)} }$ \dotfill 
		\textbf{2}

  \indexspace
{\bfseries T}\nopagebreak

  \item $\cT_\ell=\no\nabla^{\omega-2\ell}\Delta^\ell\Ric \no^2$ \dotfill 
		\textbf{3}
  \item $\cT^{\omega-\beta-2\ell}_\ell=\tr \underset{I \cup J}{\Sym}  \nabla_{IK} \Delta^\ell \Ric_{ij}  \nabla_{JK}\Ric_{ij}$ \dotfill 
		\textbf{8}
  \item $\cT^{\omega-2}_{1,1}=\tr \underset{I''\cup J''}{\Sym}\nabla_{I''}\Delta\Ric_{ij}\nabla_{J''}\Delta\Ric_{ij}$ \dotfill 
		\textbf{8}
  \item $\cT^{\omega-\beta}=\cT^{\omega-\beta}_0$ \dotfill \textbf{8}
  \item $\cT^{0}_\ell=(2\ell)\fac\cT_\ell$ \dotfill \textbf{8}
  \item $\tr\Sym T= g^{p_1p_2}\ldots g^{p_{k-1}p_k}\Sym T_{p_1\ldots p_k}$ \dotfill 
		\textbf{2}

\end{theindex}

\bibliographystyle{amsplain}
\bibliography{bibliographie}

\providecommand{\bysame}{\leavevmode\hbox to3em{\hrulefill}\thinspace}
\providecommand{\MR}{\relax\ifhmode\unskip\space\fi MR }
\providecommand{\MRhref}[2]{%
  \href{http://www.ams.org/mathscinet-getitem?mr=#1}{#2}
}
\providecommand{\href}[2]{#2}
\begin{thebibliography}{1}

\bibitem{Aub3}
T.~Aubin, \emph{Sur quelques probl\`emes de courbure scalaire}, J. Funct. Anal
  \textbf{240} (2006), 269--289.

\bibitem{Aub5}
\bysame, \emph{Solution compl\`ete de la ${C}^0$ compacit\'e de l'ensemble des
  solutions de l'\'equation de {Y}amabe}, J. Funct. Anal. \textbf{244} (2007),
  579--589.

\bibitem{Aub4}
\bysame, \emph{On the ${C}^0$ compactness of the set of the solutions of the
  {Y}amabe equation}, Bull. Sci. Math. \textbf{133} (2009), no.~2, 186--189.

\bibitem{Bre}
S.~Brendle, \emph{Blow-up phenomena for the {Y}amabe equation.}, J. Amer. Math.
  Soc. \textbf{21} (2008), no.~4, 951--979.

\bibitem{BMar}
S.~Brendle and F.~C. Marques, \emph{Blow-up phenomena for the {Y}amabe
  equation. {II}}, J. Diff. Geom. \textbf{81} (2009), no.~2, 225--250.

\bibitem{HV}
E.~Hebey and M.~Vaugon, \emph{Le probl\`eme de {Y}amabe \'equivariant}, Bull.
  Sci. Math. \textbf{117} (1993), 241--286.

\bibitem{LP}
J.M. Lee and T.~Parker, \emph{The {Y}amabe problem}, Bull. Amer. Math. Soc.
  \textbf{17} (1987), 37--91.

\bibitem{Mad2}
F.~Madani, \emph{Equivariant {Y}amabe problem and {H}ebey--{V}augon
  conjecture}, J. Func. Anal. \textbf{258} (2010), 241--254.

\bibitem{Mad4}
\bysame, \emph{Hebey--{V}augon conjecture {II}}, C. R. Acad. Sci. Paris Ser I
  \textbf{350} (2012), 849--852.

\end{thebibliography}



\end{document}